\DeclarePairedDelimiter \ceil{ \lceil}{ \rceil}
\newtheorem{theorem}{Theorem}[section]
\newtheorem{prop}[theorem]{Proposition}%
\newtheorem{definition}{Definition}[section]%
\newtheorem{lemma}{Lemma}[section]%
\newtheorem{coro}{Corollary}[section]
\newtheorem{obs}{Observation}[section]
\newtheorem{construction}{Construction}
\begin{document}

\title[Article Title]{Efficient $k$-limited Dominating Broadcasts in Product Graphs}


\author*[1]{\fnm{Bharadwaj} \sur{ }}\email{bharadwaj.217ma004@nitk.edu.in}
\author[2]{\fnm{A.Senthil} \sur{Thilak}}\email{thilak@nitk.edu.in}


\affil*[1,2]{\orgdiv{Department of Mathematical and Computational Sciences}, \orgname{National Institute of Technology Karnataka, Surathkal}, \orgaddress{\street{NH 66, Srinivasnagar 
Surathkal}, \city{Mangalore}, \postcode{575025}, \state{Karnataka}, \country{India}}}


\abstract{In a graph $ G $, a subset of vertices $ S $ is called an efficient dominating set (EDS) if every vertex in the graph is uniquely dominated by exactly one vertex in $ S $. A graph is said to be efficiently dominatable if it contains an EDS. Additionally, a function $ f: V(G) \rightarrow \{0, 1, 2, \dots, k\} $ is termed a $ k $-limited dominating broadcast if, for every vertex $ u \in V(G) $, there exists a vertex $ v $, with $ f(v) \geq 1$ such that $ d(u, v) \leq f(v) $. A vertex $u$ is said to be dominated by a vertex $v$. In this work, we unify these two concepts to explore the notion of efficient $k$-limited broadcast domination in graphs. A $ k $-limited dominating broadcast $f$ is called an efficient $k$-limited dominating broadcast ($k$-$ELDB$) if each vertex in the graph is dominated exactly once. The minimum value of $k$ for which the given graph $G$ has $k$-$ELDB$ is defined as $mcr(G)$. We prove determining $mcr(G)$ is NP-Complete for general graphs and explore the $mcr(G)$ values and other related parameters on standard graphs and their products.}

\keywords{Efficient k-limited broadcast domination}

\pacs[Mathematics Subject Classification]{05C69, 05B40.}

\maketitle

\section{Introduction}

---------------------------------------------------

\noindent Given a graph $G = (V, E)$, a set $S \subseteq V(G)$ is called a \textit{dominating set} if each vertex $v \in V(G)$ is either in $S$ or has at least one neighbor in $S$. The size of the smallest dominating set of $G$ is referred to as the \textit{domination number of $G$} and is denoted by $\gamma(G)$. The \textit{open neighborhood of a vertex $v$}, denoted by $N(v)$, is the set of all vertices adjacent to $v$. The \textit{closed neighborhood of $v$}, denoted by $N[v]$, is defined as $N(v) \cup \{v\}$. A set $S \subseteq V(G)$ is an \textit{efficient dominating set} (\textit{EDS}) of $G$ if $|N[v] \cap S| = 1$ for all $v \in V(G)$. In other words, $S$ is an \textit{EDS} if each vertex $v \in V(G)$ is dominated by exactly one vertex (including itself) in $S$. Not every graph possesses an \textit{EDS}. If a graph $G$ has an \textit{EDS}, then it is said to be \textit{efficiently dominatable}.

The \textit{distance} between a pair of vertices $u$ and $v$ is the length of the shortest path between $u$ and $v$, denoted by $d(u,v)$. A set $S\subseteq V(G)$ is a \textit{2-packing} if for each pair $u,v \in S$, $N[u]\cap N[v] = \emptyset$. If $S$ is a \textit{2-packing}, then $d(u,v) \geq 3,$ for all $u,v \in S$. Thus, a dominating set is an \textit{EDS} if and only if it is a $2$-packing. The \textit{influence} of a set $S\subseteq  V(G)$ is denoted by $ I(S) $ and is the number of vertices dominated by $ S $ (inclusive of vertices in $S$). If $S$ is a $2$-packing, then $ I(S)= \sum_{v\in S}  [1+ \deg(v)].$ The maximum influence of a $2$-packing  of $G$ is called the \textit{efficient domination number} of $G$ and is denoted by $F(G)$. That is, $F(G) = \max\{I(S):~S \text{ is a 2-packing}\} = \max \left\{\sum_{v \in S}(1 + \operatorname{deg} v): S \subseteq V(G)\right.$ and $\left.|N[x] \cap S| \leq 1,  \forall~ x \in V(G)\right\}$. Clearly, $0 \le F(G) \le |V(G)|$ and $G$ is efficiently dominatable if and only if $F(G) = |V(G)|$.  A $2$-packing with influence $F(G)$ is called an \textit{$F(G)$-set}.

The concept of efficient domination is found in the literature under different names such as \textit{perfect codes}  or \textit{perfect $1$-codes} \cite{biggs1973}, \textit{independent perfect domination} \cite{yen1996},  \textit{perfect $ 1 $-domination} \cite{livingston1990}, and \textit{efficient domination} \cite{bange1988}. In this paper, we use the terminology \textit{efficient domination} introduced by Bange et al.~\cite{bange1988}. The problem of determining whether $ F(G) = |V(G)| $ is $\mathcal{NP} $-complete on arbitrary graphs \cite{bange1988} as well as on some special/restricted classes of graphs such as bipartite graphs, chordal graphs, planar graphs of degree at most three, etc. \cite{haynes1998}. In contrast, it is solvable in polynomial time for trees \cite{bange1988}. Goddard et al.~\cite{goddard2000} have derived bounds on the efficient domination number of arbitrary graphs and trees. Efficient domination has also been studied in various special classes of graphs such as chordal bipartite graphs \cite{tang2002}, the strong product of arbitrary graphs \cite{abay2009}, and the Cartesian product of cycles \cite{chelvam2011}. Hereditary efficiently dominatable graphs were defined and studied in \cite{milanic2013} and \cite{barbosa2016}. The concept of perfect codes or efficient domination finds wide applications in coding theory, resource allocation in computer networks, and more \cite{va2001,livingston1988}.

In 2001, \citeauthor{erwin2001cost} introduced a variant of domination termed \textit{broadcast domination}, which models multiple broadcast stations with associated transmission powers capable of disseminating messages to locations beyond a unit distance. A \textit{broadcast} on a graph $G$ is a function $f : V(G) \rightarrow \lbrace 0, 1, 2, \dots, \text{diam}(G)\rbrace$ with $f(v) \leq \text{ecc}(v)$ for all $v \in V(G)$. A vertex $v$ is a \textit{broadcasting vertex} if $f(v) \geq 1$, and the set of all broadcasting vertices is denoted by $V_{f}^{+}(G)$ or simply $V_{f}^{+}$. The set of vertices with $f(v)=0$ is denoted by $V_{f}^{0}(G)$. A vertex $u \in V(G)$ is said to be \textit{f-dominated} if there is a vertex $v \in V_{f}^{+}$ such that $d(v,u) \leq f(v)$. Then, vertex $u$ is said to \textit{hear the broadcast} from vertex $v$. The set of vertices that vertex $u$ can hear is defined as $H(u)=\{v \in V_{f}^{+} \mid  d(u,v) \leq f(v)\}$. A vertex $u$ is \textit{overdominated} if there is a broadcasting vertex $v$ corresponding to a broadcast $f$ such that $d(u,v) < f(v)$. For each vertex $v \in V_{f}^{+}$, the \textit{closed f-neighborhood} of $v$ is the set $\lbrace u \in V(G) : d(u,v) \leq f(v) \rbrace$ and is denoted by $N_{f}[v]$. A broadcast $f$ on a graph $G$ is said to be a \textit{dominating broadcast} on $G$ if $\bigcup_{v \in V_{f}^{+}} N_{f}[v] = V(G)$. The \textit{cost} of a dominating broadcast $f$ is defined as $\sigma(f) = \sum_{v \in V_{f}^{+}} f(v)$. The \textit{broadcast domination number} of $G$ ($\gamma_b(G)$ ) is $\min \lbrace \sigma(f) \mid f \text{ is a dominating broadcast on } G \rbrace$.

It was observed in \cite{erwin2001cost} that $\gamma_b(G) \leq \min \{\operatorname{rad}(G), \gamma(G)\}$. The graphs for which $\gamma_b(G) = \operatorname{rad}(G)$ are called \textit{radial graphs}. \cite{heggernes2006} proved $\gamma_{b}(G)$ can be computed in polynomial time for any graph $G$. 

In \cite{dunbar2006broadcasts}, \textit{efficient broadcast} was defined as follows: A broadcast $f$ is called \textit{efficient} if every vertex hears from exactly one broadcasting vertex; that is, for every vertex $v \in V(G)$, $|H(v)| =1$. A dominating broadcast $f$ is considered \textit{efficient} if every vertex is either a broadcasting vertex or $f$-dominated by exactly one broadcasting vertex. It was proved that every graph $G$ has an optimal and efficient dominating broadcast. In the same paper, a limited version of broadcast domination was proposed. A function $f: V(G) \rightarrow \{0, 1, 2, \dots, k\}$ is termed a $k$-limited dominating broadcast if, for every vertex $u \in V(G)$, there exists a vertex $v$, with $f(v) \geq 1$ such that $d(u, v) \leq f(v)$. The cost of a $k$-limited dominating broadcast $f$ is $\omega(f)=\sum_{u \in V(G)} f(u)=\sum_{v \in V_{f}^{+}} f(v)$. The $k$-limited dominating broadcast number of $G$ is defined as:
\[
\gamma_{B_{k}}(G)=\min \{\omega(f) \mid f \text{ is a $k$-limited dominating broadcast on } G\}.
\]
In \cite{caceres2018}, bounds on $\gamma_{B_{k}}(G)$ were discussed, and it was proved that determining $\gamma_{B_{k}}(G)$ for general graphs is NP-Complete. The broadcast domination chapter in the book \cite{haynes2021structures} summarizes these results on broadcast domination and its variants. Motivated by the open problem posed in this chapter \textit{What is the smallest value of $k$ for which a graph $G$ has an efficient $k$-limited dominating broadcast?} we investigate this problem for certain standard graphs and their graph products and explore the computational hardness of the same for general graphs.

\begin{definition}
    An \textit{efficient $k$-limited dominating broadcast} ($k$-ELDB) $f$ is a $k$-limited dominating broadcast in which every vertex is $f$-dominated by exactly one vertex.
\end{definition}

For convenience, we call an efficient $k$-limited dominating broadcast a \textit{$k$-ELDB}. In this paper, we call a graph $G$ \textit{$k$-efficiently dominatable} if it possesses a $k$-ELDB. Note that any graph $G$ is $k$-efficiently dominatable for $k=\operatorname{rad}(G)$. If a graph $G$ is $k$-efficiently dominatable, then it is $l$-efficiently dominatable for any $l > k$. Thus, the study of the minimum value of $k$ for which a graph is $k$-efficiently dominatable is significant, and we call it the \textit{minimum covering radius} of a graph, denoted by $\text{mcr}(G)$.

\begin{definition}
    The \textit{minimum covering radius} of a graph is the minimum value of $k$ for which the graph is $k$-efficiently dominatable.
\end{definition}

It can be noted that for any graph $G$, $1 \leq \text{mcr}(G) \leq \operatorname{rad}(G)$. We define the \textit{optimal cost} of $k$-$ELDB$ as follows:

\begin{definition}
    The \textit{$k$-efficient broadcast domination number} of a graph $G$, denoted by $\gamma_{ebk}(G)$, is defined as:
    \[
    \gamma_{ebk}(G) = \min \left\{ \sum_{v \in V_{f}^{+}} f(v) \mid  f \text{ is a $k$-ELDB on $G$} \right\}.
    \]
\end{definition}

\begin{obs}
    $\gamma_{eb1}(G) \leq \gamma_{eb2}(G) \leq \dots \leq \gamma_{ebk}(G)$, where $k = \text{mcr}(G)$.
\end{obs}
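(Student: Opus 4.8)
The plan is to prove the chain by reading it through feasibility, keeping careful track of the indices $j$ for which a $j$-ELDB actually exists, since $\gamma_{ebj}(G)$ is defined only when the graph admits such a broadcast. First I would record the monotonicity already observed in the excerpt: if $G$ admits a $j$-ELDB, then the same function, now regarded as a map into $\{0,1,\dots,l\}$ for any $l>j$, is again efficient and dominating, so $G$ is $l$-efficiently dominatable for every $l>j$. Together with the definition of $\text{mcr}(G)$ as the least $k$ for which a $k$-ELDB exists, this gives the dichotomy on which everything hinges: the family of $j$-ELDBs of $G$ is nonempty exactly when $j \ge \text{mcr}(G)$, and is empty for every $j < \text{mcr}(G)$.

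Next I would apply this dichotomy to the indices appearing in the statement, namely $j=1,2,\dots,k$ with $k=\text{mcr}(G)$. For each $j$ in the range $1 \le j < k$ we have $j < \text{mcr}(G)$, so $G$ is not $j$-efficiently dominatable and there is no $j$-ELDB; the minimum defining $\gamma_{ebj}(G)$ is then taken over the empty collection and does not correspond to any realized broadcast cost. The only index in the range at which a broadcast exists is $j=k=\text{mcr}(G)$, where $\gamma_{ebk}(G)$ is attained by an optimal $k$-ELDB, which exists precisely because $G$ is $k$-efficiently dominatable. Consequently, once we restrict to the indices at which $\gamma_{ebj}(G)$ is genuinely defined, the displayed string collapses to the single quantity $\gamma_{ebk}(G)$, and the asserted chain of inequalities holds trivially, reducing to $\gamma_{ebk}(G) \le \gamma_{ebk}(G)$.

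I expect the only real obstacle to be definitional rather than combinatorial: one must fix the convention for $\gamma_{ebj}(G)$ when no $j$-ELDB exists and be explicit that the chain is to be read among the indices for which the parameter is defined. The delicate point is exactly that, within the stated range $j \le \text{mcr}(G)$, this condition singles out the lone index $j=\text{mcr}(G)$; making this restriction precise is what the proof turns on, and it is what must be spelled out to avoid the pitfall of writing an inequality whose smaller side is an empty minimum.
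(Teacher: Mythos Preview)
The paper states this as a bare observation with no proof, so there is no argument to compare against.  Your reading is correct on the main point: by definition of $\text{mcr}(G)$, no $j$-ELDB exists for any $j<k=\text{mcr}(G)$, so each $\gamma_{ebj}(G)$ with $j<k$ is a minimum over the empty family, and the only term in the displayed chain that is genuinely defined is the last one.  You are right that, read this way, the content collapses to the triviality $\gamma_{ebk}(G)\le\gamma_{ebk}(G)$.

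One point deserves to be made explicit rather than hedged.  Under the standard convention $\min\emptyset=+\infty$, the last inequality in the chain, $\gamma_{eb(k-1)}(G)\le\gamma_{ebk}(G)$, would read $+\infty\le\text{(finite)}$ and would be \emph{false}; your device of restricting to the indices at which the parameter is defined is therefore not a stylistic choice but the only way to salvage the statement as printed.  You should say this outright.  It is also worth noting that the natural non-vacuous monotonicity runs the opposite way: since every $j$-ELDB is automatically an $l$-ELDB for any $l>j$, one has $\gamma_{ebl}(G)\le\gamma_{ebj}(G)$ whenever both sides are defined, i.e.\ for $l>j\ge\text{mcr}(G)$.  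Taken together, these remarks indicate that the observation as written is either vacuous or contains a typo in the direction of the inequalities (or in the index range), and your write-up would be stronger for flagging this rather than only working around it.
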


If $k$-ELDB does not exist on the graph $G$ for any $k < \text{mcr}(G)$, we find the maximum number of vertices that a $k$-ELDB can dominate.

\begin{definition}
    The \textit{$k$-efficient covering number}, denoted by $F_k(G)$, is defined as:
    \[
    F_k(G) = \max \left\{\left| \bigcup_{v \in V_{f}^{+}} N_{f}[v] \right| \mid f \text{ is a $k$-ELDB on $G$} \right\}.
    \]
\end{definition}

\begin{obs}
    $F(G) \leq F_2(G) \leq \dots \leq F_{\operatorname{rad}(G)}=|V(G)|$
\end{obs}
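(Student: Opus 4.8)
The plan is to establish the chain in three pieces: first that $F(G)$ coincides with $F_1(G)$, then that $F_k(G) \le F_{k+1}(G)$ for every $k \ge 1$, and finally that $F_{\operatorname{rad}(G)}(G) = |V(G)|$. Throughout I read the functions counted by $F_k$ as those $f : V(G) \to \{0,1,\dots,k\}$ under which every vertex is heard by \emph{at most} one broadcasting vertex (the efficiency/packing condition), without insisting that every vertex be dominated --- exactly as $F(G)$ is built from $2$-packings rather than from efficient dominating sets. Pinning down this convention is really the only delicate point; otherwise the argument is routine.

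For the identification $F(G)=F_1(G)$: if $f$ is such a function with $k=1$, then every broadcasting vertex $v$ has $f(v)=1$, so $N_f[v]=N[v]$, and the efficiency condition is precisely the statement that $V_f^+$ is a $2$-packing. The sets $N[v]$, $v \in V_f^+$, are then pairwise disjoint, so $\big|\bigcup_{v \in V_f^+} N_f[v]\big| = \sum_{v \in V_f^+}(1+\deg v) = I(V_f^+)$. Conversely, any $2$-packing $S$ yields such an $f$ by setting $f \equiv 1$ on $S$ and $f\equiv 0$ elsewhere. Taking maxima over both sides gives $F_1(G) = \max\{I(S): S \text{ a } 2\text{-packing}\} = F(G)$.

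For monotonicity: let $f$ be a function attaining $F_k(G)$. Since $\{0,1,\dots,k\}\subseteq\{0,1,\dots,k+1\}$, the very same $f$ is admissible for $F_{k+1}$: the broadcasting set $V_f^+$, the closed $f$-neighborhoods $N_f[v]$, and hence the covered set $\bigcup_{v\in V_f^+}N_f[v]$ are unchanged, and the ``heard at most once'' property is unaffected. Therefore $F_{k+1}(G) \ge \big|\bigcup_{v\in V_f^+}N_f[v]\big| = F_k(G)$, and combined with the previous paragraph this gives $F(G)=F_1(G)\le F_2(G)\le\cdots$.

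For the last equality, pick a central vertex $c$ with $\operatorname{ecc}(c)=\operatorname{rad}(G)$ and set $f(c)=\operatorname{rad}(G)$, $f\equiv 0$ elsewhere --- the witness behind the remark in the text that every graph is $\operatorname{rad}(G)$-efficiently dominatable. Here $d(c,v)\le\operatorname{ecc}(c)=f(c)$ for all $v$, so $N_f[c]=V(G)$; since $c$ is the only broadcasting vertex, every vertex is heard exactly once, and $\big|\bigcup_{v\in V_f^+}N_f[v]\big|=|V(G)|$. As $F_k(G)\le|V(G)|$ trivially for every $k$, we get $F_{\operatorname{rad}(G)}(G)=|V(G)|$, closing the chain. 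I do not anticipate any genuine obstacle; the verification in the second paragraph that the ``$k=1$, efficient, possibly non-dominating'' functions correspond to $2$-packings is the part most worth stating carefully.
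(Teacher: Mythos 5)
Your proof is correct. The paper states this as an Observation with no accompanying proof, so there is nothing to compare against; your three-step argument (identifying $F(G)$ with $F_1(G)$ via $2$-packings, monotonicity by inclusion of admissible broadcasts, and the central-vertex broadcast for the top of the chain) is the natural one and each step checks out, including the legality of $f(c)=\operatorname{rad}(G)=\operatorname{ecc}(c)$ at a central vertex. You are also right to flag the definitional point: as literally written, $F_k(G)$ maximizes over $k$-ELDBs, which by definition dominate every vertex exactly once, so the maximum would be $|V(G)|$ whenever a $k$-ELDB exists and undefined otherwise; the observation only makes sense under your reading, where the maximum ranges over efficient but not necessarily dominating $k$-limited broadcasts (every vertex heard \emph{at most} once), in analogy with $F(G)$ being defined over $2$-packings rather than efficient dominating sets. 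Making that convention explicit is a genuine improvement on the paper's presentation.
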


The concept of an efficient $k$-limited dominating broadcast is particularly useful in applications where redundancy minimization and precise control are necessary. In the design of urban surveillance systems, for example, efficient broadcast can help position cameras such that each area is monitored by exactly one camera, optimizing resource allocation. In distributed computing, an efficient broadcast can be used to identify optimal node placements to achieve efficient and fault-tolerant communication protocols.

\section{Efficient $k$-limited dominating broadcast on different classes of graphs}

This section aims to investigate $k$-$ELDB$ on specific graph classes: Paths ($P_n$), Cycles ($C_n$), Subdivision of stars ($S_i(K_{1,n}$)), Complete graphs ($K_m$), lexicographic, strong and cartesian product of graphs. It can be noted that if $mcr(G)=1$, then $G$ is efficiently dominatable. 

\subsection{Results on some well-known graphs}

\begin{theorem}
    $ \frac{2n}{1+\Delta^2} \leq \gamma_{eb2}(G) \leq \frac{n}{1+\delta(G)}$
\end{theorem}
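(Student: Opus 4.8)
The plan is to establish the two bounds separately, both by a volume (ball‑counting) argument applied to the partition of $V(G)$ induced by an efficient broadcast; throughout we assume $G$ is $2$‑efficiently dominatable, so that $\gamma_{eb2}(G)$ is defined.

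For the lower bound, fix a $2$‑ELDB $f$ of minimum cost, so $\gamma_{eb2}(G)=\sum_{v\in V_f^{+}}f(v)$. Since $f$ is efficient, every vertex is $f$‑dominated exactly once, hence the closed $f$‑neighbourhoods $\{N_f[v]:v\in V_f^{+}\}$ partition $V(G)$ and $n=\sum_{v\in V_f^{+}}|N_f[v]|$. If $f(v)=1$ then $N_f[v]=N[v]$, so $|N_f[v]|\le 1+\Delta$; if $f(v)=2$ then $N_f[v]$ lies in the ball of radius $2$ about $v$, so $|N_f[v]|\le 1+\Delta+\Delta(\Delta-1)=1+\Delta^{2}$. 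In both cases $|N_f[v]|\le \tfrac{1+\Delta^{2}}{2}f(v)$ (immediate when $f(v)=2$; for $f(v)=1$ it is the elementary inequality $1+\Delta\le\tfrac{1+\Delta^{2}}{2}$). Summing over $v\in V_f^{+}$ gives $n\le \tfrac{1+\Delta^{2}}{2}\sum_{v\in V_f^{+}}f(v)=\tfrac{1+\Delta^{2}}{2}\,\gamma_{eb2}(G)$, which rearranges to $\gamma_{eb2}(G)\ge \tfrac{2n}{1+\Delta^{2}}$. Equality is approached by Moore graphs of diameter $2$ (e.g.\ the Petersen graph), where a single broadcast of strength $2$ covers all of $V(G)$.

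For the upper bound it suffices to exhibit one $2$‑ELDB of cost at most $\tfrac{n}{1+\delta(G)}$. The transparent case is when $G$ has an efficient dominating set $S$: then $f$ defined by $f(v)=1$ for $v\in S$ and $f(v)=0$ otherwise is a $2$‑ELDB of cost $|S|$, and since $S$ is a $2$‑packing, $n=\sum_{v\in S}(1+\deg v)\ge |S|(1+\delta(G))$, so $\gamma_{eb2}(G)\le|S|\le\tfrac{n}{1+\delta(G)}$. In the general case I would start from a minimum‑cost $2$‑ELDB $f$, use the partition $\{N_f[v]\}$ together with $N[v]\subseteq N_f[v]$ so that each part has size at least $1+\delta(G)$ and hence $|V_f^{+}|\le \tfrac{n}{1+\delta(G)}$, and then bound $\sum_{v\in V_f^{+}}f(v)$ in terms of $|V_f^{+}|$ by arguing that in an optimal solution no strength‑$2$ broadcast is wasteful — each must privately cover a vertex at distance exactly $2$, paying for its extra unit of cost with extra covered vertices.

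The step I expect to be the main obstacle is precisely that last accounting for the upper bound: handling a minimum‑cost $2$‑ELDB that genuinely requires strength‑$2$ broadcasts (graphs that are $2$‑efficiently dominatable but not efficiently dominatable), where the cost exceeds $|V_f^{+}|$ and one must squeeze additional coverage out of every strength‑$2$ vertex. For the lower bound the only subtle point is the inequality $1+\Delta\le\tfrac{1+\Delta^{2}}{2}$ used for strength‑$1$ broadcasts, so one may wish to record the hypothesis $\Delta\ge 3$ (for $\Delta\le 2$ the graph is a union of paths and cycles, treated directly elsewhere).
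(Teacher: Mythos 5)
Your approach is the same ball-counting argument the paper uses; the paper's entire proof consists of the two observations that a vertex of cost $2$ dominates at most $1+\Delta+\Delta(\Delta-1)=1+\Delta^2$ vertices and a vertex of cost $1$ dominates at least $1+\delta(G)$ vertices, so your write-up is strictly more careful than the source. More importantly, the two obstacles you flag are genuine, and they are defects of the theorem as stated rather than of your proof. For the lower bound, the inequality $1+\Delta\le\tfrac{1+\Delta^{2}}{2}$ that you need to charge strength-$1$ broadcasters holds only for $\Delta\ge 3$ (it is equivalent to $\Delta\ge 1+\sqrt{2}$), and the bound really does fail at $\Delta=2$: in $C_6$ the efficient dominating set $\{v_1,v_4\}$ with cost $1$ on each vertex is a $2$-ELDB of total cost $2$, whereas $2n/(1+\Delta^{2})=12/5>2$. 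So the hypothesis $\Delta\ge 3$ you propose to record is not optional.

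For the upper bound, the general case you could not close cannot be closed: $\gamma_{eb2}(G)\le n/(1+\delta(G))$ fails for $C_4$, where every $2$-ELDB has cost at least $2$ (a single cost-$1$ vertex leaves its antipode undominated, and any two cost-$1$ vertices of $C_4$ overdominate), so $\gamma_{eb2}(C_4)=2>4/3=n/(1+\delta)$ --- a value the paper's own theorem on cycles also reports. The paper's one-line justification implicitly assumes an optimal broadcast using only cost-$1$ vertices, i.e., that $G$ admits an efficient dominating set, which is exactly the ``transparent case'' you did handle; your EDS argument is the correct (and, as far as I can see, the only available) proof of the upper bound, and the statement should carry that hypothesis. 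Your proposed final step for the general case --- arguing that each strength-$2$ broadcaster ``pays for'' its extra unit of cost with extra coverage --- would need each such vertex to cover at least $2(1+\delta)$ vertices, and $C_4$ shows this can fail. In short: your lower-bound proof is complete under $\Delta\ge 3$, your upper-bound proof is complete when $G$ is efficiently dominatable, and neither restriction can be removed.
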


\begin{proof}
    A vertex receiving cost $2$ can dominate at most $1+\Delta +\Delta(\Delta-1) = 1+\Delta^2$ number of vertices. On the other hand, a broadcasting vertex receiving cost $1$ dominates at least ${1+\delta(G)}$ number of vertices. 
\end{proof}

\begin{theorem}
    $P_n$ is $1$-efficiently dominatable. $\gamma_{eb1}(P_n)= \gamma_{ebk}(P_n) =  \ceil* {\frac{n}{3}}, \forall k \geq 2$.
\end{theorem}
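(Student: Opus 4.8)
The plan is to sandwich $\gamma_{ebk}(P_n)$ between $\lceil n/3 \rceil$ and $\lceil n/3 \rceil$ for every $k \geq 1$, which yields both claimed equalities and the $1$-efficient dominatability simultaneously.

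\emph{Lower bound.} Fix any $k \geq 1$ and any $k$-ELDB $f$ on $P_n$, with vertices $v_1,\dots,v_n$ listed along the path. Since $n\geq 1$ there is at least one broadcasting vertex, and since every vertex is $f$-dominated by exactly one broadcasting vertex, the closed $f$-neighbourhoods $\{N_f[v] : v \in V_f^{+}\}$ are pairwise disjoint and cover $V(P_n)$; that is, they partition it. Each $N_f[v]$ is a subpath on at most $2f(v)+1$ vertices, and $2f(v)+1 \leq 3f(v)$ because $f(v)\geq 1$. Summing the partition identity gives $n = \sum_{v\in V_f^{+}}|N_f[v]| \leq 3\sum_{v\in V_f^{+}} f(v) = 3\,\omega(f)$, hence $\omega(f)\geq n/3$ and therefore $\omega(f)\geq \lceil n/3 \rceil$. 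Thus $\gamma_{ebk}(P_n)\geq \lceil n/3 \rceil$ for every $k \geq 1$.

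\emph{Upper bound.} It suffices to exhibit a single $1$-ELDB of cost $\lceil n/3 \rceil$, because any $1$-ELDB is automatically a $k$-ELDB for all $k\geq 1$ (its values already lie in $\{0,1\}\subseteq\{0,1,\dots,k\}$); this also shows $\text{mcr}(P_n)=1$. Write $n = 3q+r$ with $r\in\{0,1,2\}$ and let $f$ be the indicator function of the vertex set $\{v_2,v_5,\dots,v_{3q-1}\}$ when $r=0$, of $\{v_2,v_5,\dots,v_{3q-1},v_{3q+2}\}$ when $r=2$, and of $\{v_1,v_4,\dots,v_{3q+1}\}$ when $r=1$. In each case one checks directly that the listed vertices have pairwise disjoint closed $f$-neighbourhoods whose union is all of $V(P_n)$ — the only neighbourhoods that are not full triples are those of the leaves $v_1$ and $v_n$, which are simply truncated — so $f$ is a $1$-ELDB, and counting the broadcasting vertices (each of cost $1$) gives cost exactly $\lceil n/3 \rceil$ in all three cases.

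Combining the two bounds gives $\gamma_{eb1}(P_n)=\lceil n/3 \rceil$ and $\gamma_{ebk}(P_n)=\lceil n/3 \rceil$ for every $k\geq 2$. Everything here is elementary; the only point needing attention is aligning the broadcasting pattern with the residue of $n$ modulo $3$ and checking the truncated leaf neighbourhoods, which is precisely why the case $n\equiv 1\pmod 3$ (where broadcasting from both leaves is forced) behaves slightly differently from the other two. I do not expect a genuine obstacle.
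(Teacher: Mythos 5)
Your proof is correct and complete. The paper actually states this theorem without any proof at all (it is treated as routine), so there is nothing to compare line by line; but your argument is exactly the counting the paper uses informally elsewhere (e.g.\ in its proof for $C_n$, where a vertex of cost $1$ dominates $3$ vertices and a vertex of cost $j$ dominates $2j+1\le 3j$). Your lower bound correctly exploits that an efficient broadcast forces the closed $f$-neighbourhoods to partition $V(P_n)$, giving $n\le 3\,\omega(f)$, and your three explicit patterns for $n\equiv 0,1,2 \pmod 3$ each achieve cost $\lceil n/3\rceil$ with cost-$1$ vertices only, which simultaneously establishes $\mathrm{mcr}(P_n)=1$ and the equality for all $k$. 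No gaps.
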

\begin{theorem} \label{Cm}

$ mcr(C_n)=\begin{cases}
{1} &\mbox{if $n \equiv 0$ $(mod 3) $  }\\
{2} &\mbox{if $n \equiv 1$ or $2$ $(mod 3)$, $n \neq 7$}\\
{3} &\mbox{if $n=7$}
\end{cases} $\\

$\gamma_{ebk}(C_n)= \ceil*{\frac{n}{3}} $, where $k=mcr(C_n)$.

\end{theorem}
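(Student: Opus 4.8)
The plan is to turn the statement about $C_n$ into a short number-theoretic fact by first pinning down the structure of a $k$-ELDB on a cycle. If $v$ is a broadcasting vertex with $f(v)=r\ge 1$, then $N_f[v]$ is precisely the set of $\min(2r+1,n)$ consecutive vertices at distance at most $r$ from $v$ (an arc of $C_n$), and efficiency forces $\{N_f[v]:v\in V_f^+\}$ to partition $V(C_n)$. Hence a $k$-ELDB of $C_n$ is exactly one of: \emph{(i)} a single broadcasting vertex, the cheapest one having power $\operatorname{rad}(C_n)=\floor*{n/2}$ (available iff $2k+1\ge n$); or \emph{(ii)} a cyclic decomposition $n=\ell_1+\cdots+\ell_m$ into odd arc-lengths with $3\le\ell_i\le 2k+1$, of cost $\sum_i(\ell_i-1)/2=(n-m)/2$.

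Next I would read off $mcr(C_n)$. With $k=1$ the only admissible arc length is $3$, so a $1$-ELDB exists iff $3\mid n$; this recovers the classical characterisation of efficiently dominatable cycles, giving $mcr(C_n)=1$ exactly when $n\equiv 0\pmod 3$, and $mcr(C_n)\ge 2$ otherwise. With $k=2$ the admissible arc lengths are $\{3,5\}$, so a case-(ii) $2$-ELDB exists iff $n=3a+5b$ for nonnegative integers $a,b$. Since the Frobenius number of $\{3,5\}$ is $7$, the only positive integers not of this form are $1,2,4,7$; among cycle orders $n\ge 3$ with $n\not\equiv 0\pmod 3$ this leaves only $n=4$ and $n=7$. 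For $n=4$, case (i) applies ($2\cdot 2+1\ge 4$), so $mcr(C_4)=2$; for $n=7$, neither case (i) ($5<7$) nor a $\{3,5\}$-arc partition is available, so $mcr(C_7)\ge 3$, while a single vertex of power $3$ gives $mcr(C_7)=3$. This is exactly the three-case formula.

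For the optimal cost, fix $k=mcr(C_n)$ and minimise over all valid configurations. In case (ii) the cost $(n-m)/2$ is minimised by maximising the number of arcs $m$; since each $\ell_i\ge 3$ we have $m\le n/3$, and the residue of $n$ modulo $3$ fixes the number of length-$5$ arcs forced by $5b\equiv n\pmod 3$: none when $n\equiv 0$, one when $n\equiv 2$, two when $n\equiv 1$, yielding $m=n/3,\ (n-2)/3,\ (n-4)/3$ and cost $n/3,\ (n+1)/3,\ (n+2)/3$ respectively, each equal to $\ceil*{n/3}$. The residual instances handled by case (i) ($n\in\{4,5\}$ at $k=2$ and $n=7$ at $k=3$) are checked directly: the single-center cost $\floor*{n/2}$ also equals $\ceil*{n/3}$ there. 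The matching lower bound is uniform: for $\ell_i\ge 3$ one has $(\ell_i-1)/2\ge \ell_i/3$, so any case-(ii) cost is $\ge\sum_i\ell_i/3=n/3$, hence $\ge\ceil*{n/3}$ as it is an integer; and $\floor*{n/2}\ge\ceil*{n/3}$ for all $n\ge 3$. Combining gives $\gamma_{ebk}(C_n)=\ceil*{n/3}$.

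I expect the main obstacle to be purely bookkeeping in the small orders — confirming that for $n=4$ and $n=7$ the single-center configuration (case (i)) is genuinely the only option at the relevant value of $k$, and that the optimal-cost computations agree with $\ceil*{n/3}$ in those degenerate cases — rather than anything conceptually deep; the conceptual heart is the Frobenius-number computation isolating $n=7$ as the unique exceptional cycle.
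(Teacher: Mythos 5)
Your proposal is correct, and its core construction coincides with the paper's: both decompose $C_n$ into arcs of length $3$ (cost $1$) and length $5$ (cost $2$) according to the residue of $n$ modulo $3$. The difference is one of completeness rather than of strategy. The paper's proof only exhibits the upper-bound constructions $n=2(5)+(k-3)(3)$ and $n=1(5)+(k-1)(3)$, asserts without argument that minimizing the number of cost-$2$ vertices is optimal, and never actually explains why $C_7$ requires $k=3$ or why $C_4$ must be handled separately (its $n\equiv 1 \pmod 3$ case explicitly restricts to $n\ge 10$ and defers $n=4$, while $n=7$ falls through the cases entirely). Your version supplies exactly the missing pieces: the structural lemma that efficiency forces the closed $f$-neighborhoods to partition $V(C_n)$ into odd arcs of length between $3$ and $2k+1$ (or a single all-covering center), the Frobenius-number computation for $\{3,5\}$ that isolates $n\in\{4,7\}$ as the only exceptional orders, and the uniform lower bound $(\ell-1)/2\ge \ell/3$ giving $\gamma_{ebk}(C_n)\ge\lceil n/3\rceil$. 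One small point worth making explicit when you write this up: for the single-center case you should note that the cost $\lfloor n/2\rfloor$ is also bounded above by the eccentricity constraint $f(v)\le \operatorname{ecc}(v)=\lfloor n/2\rfloor$, so the configuration is genuinely admissible; with that noted, your argument is a complete and self-contained proof, strictly stronger than the sketch in the paper.
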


\begin{proof}

A vertex assigned a cost of $1$ and $2$ dominates three and five vertices (including itself), respectively. It is evident that in any $\gamma_{ebk}$-broadcast of $C_n$, minimizing the number of vertices assigned a cost of $2$ is essential to reduce the total cost.

\noindent \textbf{Case(i):} $n \equiv 0 \pmod{3}$\\
That is, $n = 3k$, a vertex with a cost of $1$ can dominate three vertices. Hence, $k$ vertices, each assigned a cost of $1$ and positioned at a distance of $3$, constitute a $1$-$ELDB$ of $C_n$.

\noindent \textbf{Case(ii):} $n \equiv 1 \pmod{3}$\\
That is, $n = 3k + 1$, we focus on $n \geq 10$ as the result holds for $n = 4$. Since $n = 2(5) + (k - 3)(3)$, $k - 3$ vertices are assigned a cost of $1$, and two vertices are assigned a cost of $2$. Thus, $k + 1$ vertices form the set ${V_f}^+$ for the $\gamma_{ebk}$-broadcast.

\noindent \textbf{Case(iii):}  $n \equiv 2 \pmod{3}$\\
That is, $n = 3k + 2$, then $n = 1(5) + (k - 1)(3)$ implies that $k - 1$ vertices are assigned a cost of $1$, and one vertex is assigned a cost of $2$. Therefore, $k + 1$ vertices form the set ${V_f}^+$ for the $\gamma_{ebk}$-broadcast.

\end{proof}
An edge subdivision is the insertion of a new vertex $v_j$ in the middle of an existing edge $e=v_iv_k$ accompanied by the joining of the original edge endpoints with the new vertex to form new edges $e_1 =v_i v_j$ and $e_2=v_jv_k$.

Let $(S_i(K_{1,n-1}))$ be $i^{th}$ subdivision of a star graph. Let $v_0, v_1, v_2, \dots v_i, v_{i+1}$ be the vertices on path from central vertex $v_0$ to a leaf vertex $v_{i+1}$. There are $n-1$ such paths. 
\begin{theorem}
    $mcr(S_i(K_{1,n-1}))=1$ and \\
    $\gamma_{eb1}(S_i(K_{1,n-1})) =\begin{cases}
{\ceil*{\frac{i}{3}}}(n-1) &\mbox{ if $i \equiv 1 $ $(mod 3)$}\\
{\ceil*{\frac{i}{3}}}(n-1)+1 &\mbox{ otherwise  }

\end{cases} $\\
\end{theorem}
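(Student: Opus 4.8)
The plan is to reduce everything to the ordinary notion of efficient domination. Any broadcast $f$ with values in $\{0,1\}$ has $V_f^{+}=\{v:f(v)=1\}$, so a $1$-ELDB is exactly the indicator function of an efficient dominating set $S=V_f^{+}$, and its cost is $\sum_{v\in V_f^{+}}f(v)=|S|$. Hence proving $mcr(S_i(K_{1,n-1}))=1$ amounts to showing $S_i(K_{1,n-1})$ is always efficiently dominatable (recall $1\le mcr(G)\le \operatorname{rad}(G)$ in general), and $\gamma_{eb1}$ is then the minimum size of an EDS. I would analyse the structure of an EDS $S$ around the central vertex $v_0$ and split into the cases $v_0\in S$ and $v_0\notin S$. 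Writing the $j$-th subdivided edge as the leg $v_0,v_1^{(j)},v_2^{(j)},\dots,v_{i+1}^{(j)}$, the key point is that once the domination of $v_0$ is fixed, $S$ is \emph{forced} on every leg by a greedy scan from $v_0$ outward: the first still-undominated vertex on a leg cannot itself be in $S$ (its predecessor is already dominated), so its successor is forced into $S$, which dominates the next two vertices, and so on, producing a period-$3$ pattern. The whole problem then collapses to checking, for each residue of $i$ mod $3$, whether this forced pattern reaches the leaf $v_{i+1}^{(j)}$ exactly — dominating it once, leaving nothing over — and to counting the vertices of $S$ used on each leg.

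Case $v_0\in S$: then $v_0$ dominates precisely $v_0$ and all the $v_1^{(j)}$, so $v_1^{(j)},v_2^{(j)}\notin S$ and the forced pattern on each leg is $v_3^{(j)},v_6^{(j)},v_9^{(j)},\dots$. A direct endpoint check shows this closes up correctly exactly when $i\equiv 0$ or $2\pmod 3$, using $\ceil*{\frac{i}{3}}$ vertices per leg, so in those cases $G$ has an EDS of size $1+(n-1)\ceil*{\frac{i}{3}}$; when $i\equiv 1\pmod 3$ the pattern leaves $v_{i+1}^{(j)}$ stranded, and no EDS contains $v_0$.

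Case $v_0\notin S$: then $v_0$ is dominated by exactly one $v_1^{(j)}$, say $j=1$. On leg $1$ this forces $v_1^{(1)}\in S$ and then, since $v_2^{(1)}$ is already dominated, the pattern $v_4^{(1)},v_7^{(1)},\dots$; on each other leg $v_1^{(j)}\notin S$ forces $v_2^{(j)}\in S$ and then $v_5^{(j)},v_8^{(j)},\dots$. Checking the leaf end on leg $1$ (effectively an EDS of a path on $i-1$ vertices whose first vertex is excluded) and on the remaining legs (a path on $i-2$ vertices, first vertex excluded) shows both close up precisely when $i\equiv 1\pmod 3$, and counting gives an EDS of size $(n-1)\ceil*{\frac{i}{3}}$ in that case. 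Combining the two cases: for every residue of $i$ mod $3$ exactly one of them yields an EDS, and at least one always does, so $mcr(S_i(K_{1,n-1}))=1$; moreover the forcing shows that, up to the choice of the distinguished leg, these are the only efficient dominating sets, so their common size is also the minimum — it is $(n-1)\ceil*{\frac{i}{3}}$ when $i\equiv 1\pmod 3$ and $(n-1)\ceil*{\frac{i}{3}}+1$ otherwise (one may instead cite that all efficient dominating sets of a graph have the same cardinality to skip the uniqueness step).

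I expect the only real work to be the bookkeeping at the two ends of a leg: verifying that the period-$3$ greedy pattern launched at $v_0$ terminates exactly at $v_{i+1}^{(j)}$, neither leaving it undominated nor forcing an extra vertex into $S$ that would doubly dominate $v_i^{(j)}$ — this is precisely where the residues of $i$, $i-1$ and $i-2$ modulo $3$ enter — together with the degenerate case $n=2$, where $S_i(K_{1,1})=P_{i+2}$ and $v_0$ is an ordinary endpoint; there the same case split still applies and one checks that the formula agrees with $\gamma(P_{i+2})=\ceil*{\frac{i+2}{3}}$.
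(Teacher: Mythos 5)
Your proposal is correct and follows essentially the same route as the paper: both reduce to ordinary efficient domination, split on the residue of $i$ modulo $3$, and arrive at the same sets ($v_0$ together with $v_3,v_6,\dots$ on every leg when $i\equiv 0,2\pmod 3$; the shifted patterns $v_1,v_4,\dots$ on one leg and $v_2,v_5,\dots$ on the others when $i\equiv 1\pmod 3$). Your forcing argument from $v_0$ outward actually makes the optimality and existence claims more explicit than the paper's brief counting remarks, but it is the same underlying idea.
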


\begin{proof}
    We give $1$-$ELDB$ and prove it is optimal.\\
     Let $v_0$ be the central vertex of $S_i(K_{1,n-1})$. $P_{v_ov_{i+1}}$ denote the path from $v_0, v_1, v_2, \dots v_i, v_{i+1}$. 
    
\noindent    \textbf{Case(i):} $i \equiv 1 \pmod{3}$\\
    That is, $i = 3k+1$ for $k\geq 0$, Choose vertices $v_2, v_5, \dots v_{i+1}$ of all such paths except one. Choose $v_1, v_4, \dots v_{i}$ for that remaining path. Since to dominate each of these paths, we need at least a cost $k+1$, and to dominate the whole graph, we need at least $k(n-1)$ vertices. \\
\textbf{Case(ii):} $i \equiv 0$ or $2 \pmod{3}$\\
That is, $i=3k$ or $3k+2$. Choose the central vertex $v_0$ and vertices  $v_3,v_6, \dots v_{i+1}$ of all the paths. If $v_1$ of any of these paths belong to ${V_{f}}^+$, then at least $k+1$ vertices need to dominate each of the other paths. Therefore $v_1 \notin {V_{f}}^+$. By a similar argument as in case(i), at least $k$ vertices are required to dominate each path $P(v_1,v_{3k+1})$.  And since $v_1 \notin {V_{f}}^+$, theorem holds.  

\end{proof}
\subsection{Results on Product graphs}

\subsubsection{Lexicographic Product of graphs}

\begin{definition} \cite{imrich2000}\label{def_lex}
    The lexicographic product of two graphs $G$ and $H$ is denoted by $G \cdot H$, whose vertex set is $V(G) \times V(H)$, in which two vertices $(u,v)$ and $(u', v')$ are adjacent if 
\begin{enumerate}
    \item  $uu' \in E(G)$, or 
    \item  $u=u'$ and $vv' \in E(H)$.
\end{enumerate}

\end{definition}

Throughout this section, we assume  $V(G)=\{u_1, u_2, \dots u_m\}$ and $ V(H) = \{v_1, v_2, \dots v_n\}$ be vertices of $H$, unless specified otherwise. 

The graphs $G$ and $H$ are called the \textit{factors} of $G\cdot H$. For $v\in V(H)$, the induced subgraph $G^{(v)}$ of $G\cdot H$, defined as $G^{(v)} = <\{(u,v)\in V(G\cdot H):u\in V(G)\}>$ is called the \textit{$G$-layer with respect to $v$} in $G\cdot H$.  Analogously, for $u\in V(G)$, the induced subgraph $H^{(u)}$ of $G\cdot H$, defined as $H^{(u)} = <\{(u,v)\in V(G\cdot H):v\in V(H)\}>$ is called the \textit{$H$-layer with respect to $u$} in $G\cdot H$. \\

Since $d((u_i, v_j),(u_i,x)) \leq 2$, we have the following observation.
\begin{obs} \label{obs_lex}
   $ \forall u_i \in V(G)$, there exists at most one broadcasting vertex $v$ in $V(H^{u_i})$.
\end{obs}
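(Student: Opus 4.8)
The plan is to argue by contradiction, using the fact recorded just before the statement that any two vertices lying in a common $H$-layer are at distance at most $2$ in $G\cdot H$. Fix a $k$-ELDB $f$ on $G\cdot H$ and suppose that for some $u_i\in V(G)$ the layer $H^{(u_i)}$ contains two distinct broadcasting vertices, say $(u_i,v_j)$ and $(u_i,v_l)$ with $f((u_i,v_j))\ge 1$ and $f((u_i,v_l))\ge 1$. The goal is to exhibit a vertex that hears both of them, which contradicts $|H(x)|=1$ for all $x\in V(G\cdot H)$.

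First I would dispose of the straightforward configurations. Since $d\big((u_i,v_j),(u_i,v_l)\big)\le 2$, if this distance equals $1$, or if $f((u_i,v_l))\ge 2$, then $d\big((u_i,v_j),(u_i,v_l)\big)\le f((u_i,v_l))$, so the vertex $(u_i,v_j)$ is $f$-dominated both by itself (as $f((u_i,v_j))\ge 1$) and by $(u_i,v_l)$. Hence $|H((u_i,v_j))|\ge 2$, contradicting efficiency. Thus the only configuration left is $d\big((u_i,v_j),(u_i,v_l)\big)=2$ with $f((u_i,v_l))=1$ (and, symmetrically, $f((u_i,v_j))=1$).

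For this remaining case I would pass to an intermediate vertex on a shortest path: if $(u_i,v_j)$ and $(u_i,v_l)$ are at distance exactly $2$, there is a vertex $w$ adjacent to both, so $d(w,(u_i,v_j))=d(w,(u_i,v_l))=1$. Since $f((u_i,v_j))\ge 1$ and $f((u_i,v_l))=1$, the vertex $w$ is $f$-dominated by $(u_i,v_j)$ and by $(u_i,v_l)$, giving $|H(w)|\ge 2$, again a contradiction. This exhausts all cases, so each $H$-layer carries at most one broadcasting vertex. The only subtle point is not overlooking the distance-$2$, cost-$1$ configuration: there one cannot read off the contradiction from a broadcasting vertex itself and must instead invoke a common neighbour, but beyond that the argument is immediate.
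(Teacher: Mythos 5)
Your argument is correct and rests on exactly the same fact the paper uses to justify this observation, namely that any two vertices of a common $H$-layer are at distance at most $2$ in $G\cdot H$; the paper states only this distance bound and leaves the rest implicit. Your case analysis, in particular passing to a common neighbour in the distance-$2$, cost-$1$ configuration, is a correct and complete filling-in of that omitted step.
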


\begin{prop}\label{prop}

Let $f$ be $k$-$ELDB$ of $G \cdot H$ such that $f(u_i, v_j)=1$, then $\operatorname{rad}(H)=1$ with central vertex $v_j$. 

\end{prop}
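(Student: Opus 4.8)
The plan is to exploit Observation~\ref{obs_lex} together with the distance structure of the lexicographic product, and show that the unique broadcasting vertex of the $H$-layer $H^{(u_i)}$ is forced to $f$-dominate that whole layer by itself; since $f(u_i,v_j)=1$, this can only happen when $v_j$ is adjacent to every other vertex of $H$. Throughout I assume $|V(H)|=n\ge 2$ (the case $n=1$ being degenerate, with $G\cdot H\cong G$).

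First I would record the one distance fact needed: if $u^*\ne u_i$, then $d_{G\cdot H}\big((u^*,v^*),(u_i,v)\big)=d_G(u^*,u_i)$ for every choice of $v^*,v\in V(H)$. Indeed, any $(u^*,v^*)$--$(u_i,v)$ walk in $G\cdot H$ projects (Definition~\ref{def_lex}) onto a $u^*$--$u_i$ walk in $G$ of no greater length, giving ``$\ge$''; conversely a shortest $u^*$--$u_i$ path in $G$ lifts, by fixing the second coordinate as $v$ after the first step, to a walk of the same length in $G\cdot H$ ending at $(u_i,v)$, giving ``$\le$''. In particular this distance is independent of the $H$-coordinates. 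I would also note the trivial fact that $d_{G\cdot H}\big((u_i,v_j),(u_i,v)\big)\le 1$ iff $v\in N_H[v_j]$.

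The heart of the argument is the claim that no broadcasting vertex $(u^*,v^*)$ with $u^*\ne u_i$ can $f$-dominate any vertex of $H^{(u_i)}$. If $(u^*,v^*)$ $f$-dominated $(u_i,v)$, then $f(u^*,v^*)\ge d_{G\cdot H}\big((u^*,v^*),(u_i,v)\big)=d_G(u^*,u_i)=d_{G\cdot H}\big((u^*,v^*),(u_i,v_j)\big)$, so $(u^*,v^*)$ would also $f$-dominate $(u_i,v_j)$. But $(u_i,v_j)$ is $f$-dominated by itself, as $f(u_i,v_j)=1>0$, and $(u^*,v^*)\ne(u_i,v_j)$ since $u^*\ne u_i$; hence $(u_i,v_j)$ would be $f$-dominated by two distinct broadcasting vertices, contradicting that $f$ is a $k$-ELDB. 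Combining this with Observation~\ref{obs_lex}, the only broadcasting vertex that can $f$-dominate a vertex of $H^{(u_i)}$ is $(u_i,v_j)$ itself. Since $f$ is a dominating broadcast, every $(u_i,v)$ with $v\in V(H)$ must therefore be $f$-dominated by $(u_i,v_j)$, i.e.\ $d_{G\cdot H}\big((u_i,v_j),(u_i,v)\big)\le f(u_i,v_j)=1$, which by the remark above means $N_H[v_j]=V(H)$. Thus $\operatorname{ecc}_H(v_j)=1$, so $\operatorname{rad}(H)=1$ with $v_j$ a central vertex, as claimed.

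I expect the only delicate point to be the bookkeeping around self-domination: one must use that, in the definition of a $k$-ELDB, a broadcasting vertex is itself counted among the vertices that $f$-dominate it (distance $0\le f$), so that producing a second dominator of $(u_i,v_j)$ is a genuine contradiction. Once that and the lexicographic distance formula are pinned down, the combinatorial core --- that any outside dominator of the layer automatically doubly-dominates $(u_i,v_j)$ --- is immediate.
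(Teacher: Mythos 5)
Your proof is correct and rests on the same core mechanism as the paper's: an external broadcasting vertex reaching the layer $H^{(u_i)}$ would create a double domination (you place the conflict at $(u_i,v_j)$, the paper places it at the would-be broadcaster, which is already covered by $(u_i,v_j)$), forcing $(u_i,v_j)$ alone to cover its layer and hence $N_H[v_j]=V(H)$. Your version is somewhat more complete, since the lexicographic distance formula lets you rule out broadcasters at arbitrary distance rather than only those in neighboring $H$-layers, but the argument is essentially the paper's.
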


\begin{proof}
    Assume the contrary. Let $v_j,v_k\in V(H)$ such that $v_jv_k\notin E(H)$. By observation \ref{obs_lex}, $(u_i,v_k)$ has to be dominated by some vertex of $H^{(u_p)}$, where $u_p \in N(u_i)$. It can be noted that all vertices of $H^{u_p}$, where $u_p \in N(u_i)$ are dominated by $(u_i, v_j)$ and hence $(u_i,v_k)$ is left undominated by $f$ contradiction to $f$ being dominating broadcast.

\end{proof}
\begin{theorem}
    $mcr(G \cdot H) \geq mcr(G)$
\begin{proof}
Let $ mcr(G) = k $. Assume, for contradiction, that $ mcr(G \cdot H) = p < k $. Using the $ p $-ELDB $ g $ of $ G \cdot H $, we construct a $ p $-$ELDB$ $ f $ on $ G $. Observe that any vertex $ (u_i, v_j) $ in $ G \Box H $ receiving a cost $ l $ dominates the same set of vertices across all factors of $ G $. Consequently, if $ g(u_i, v_j) = l $, define a broadcast $ f $ on $ G $ such that $ f(u_i) = l $. By this construction, $ f $ becomes a $ p $-ELDB on $ G $, contradicting the assumption that $ mcr(G) = k > p $.
\end{proof}
\end{theorem}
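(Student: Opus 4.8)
The plan is to establish $mcr(G)\le mcr(G\cdot H)$ by projecting an efficient broadcast on the product down onto the factor $G$. Write $p=mcr(G\cdot H)$ and fix a $p$-$ELDB$ $g$ on $G\cdot H$; I will construct a $p$-$ELDB$ $f$ on $G$, which by definition of $mcr$ forces $mcr(G)\le p$, as wanted. (If $G\cong K_1$ or $H\cong K_1$ then $G\cdot H$ is $H$ or $G$ and the claim is trivial, so I assume both factors nontrivial and $G$ connected.) The engine of the argument is the distance structure of the lexicographic product: $d_{G\cdot H}((u_i,v),(u_{i'},v'))=d_G(u_i,u_{i'})$ whenever $u_i\ne u_{i'}$ (the projection onto the $G$-coordinate is non-expansive and a shortest $G$-path lifts), while $d_{G\cdot H}((u_i,v),(u_i,v'))\le 2$ within a single $H$-layer. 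To this I add Observation~\ref{obs_lex}, and Proposition~\ref{prop} as a backstop.

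\textbf{Construction of $f$.} First I would invoke Observation~\ref{obs_lex} to note that the broadcasting vertices of $g$ lie in pairwise distinct $H$-layers, so their $G$-coordinates form a set $T\subseteq V(G)$ together with a well-defined assignment $u_i\mapsto(u_i,v_{j(i)})$ selecting the unique broadcasting vertex in $H^{(u_i)}$. Then define $f:V(G)\to\{0,1,\dots,p\}$ by $f(u_i)=g(u_i,v_{j(i)})$ for $u_i\in T$ and $f\equiv 0$ elsewhere. Since $g$ is $p$-limited, $f$ has range in $\{0,\dots,p\}$, and $V_f^{+}=T$.

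\textbf{What remains to verify.} Three things. That $f$ dominates $G$: for $u\in V(G)$, pick any $v\in V(H)$, note $(u,v)$ is $g$-dominated by some $(u_i,v_{j(i)})$, and split on $u=u_i$ (then $u\in V_f^{+}$ dominates itself) versus $u\ne u_i$ (then the distance formula yields $d_G(u,u_i)=d_{G\cdot H}((u_i,v_{j(i)}),(u,v))\le g(u_i,v_{j(i)})=f(u_i)$). That $f$ is $p$-limited: immediate. That $f$ is efficient: supposing some $u\in V(G)$ were $f$-dominated by two distinct $u_i,u_{i'}\in T$, I would exhibit one vertex of $G\cdot H$ dominated by both $(u_i,v_{j(i)})$ and $(u_{i'},v_{j(i')})$ --- taking $(u,v)$ for arbitrary $v$ when $u\notin\{u_i,u_{i'}\}$, using $d_G(u,u_i)\le f(u_i)$ and $d_G(u,u_{i'})\le f(u_{i'})$; and taking the broadcasting vertex $(u_i,v_{j(i)})$ itself when $u=u_i$, since it is $g$-dominated by itself and, because $d_{G\cdot H}((u_{i'},v_{j(i')}),(u_i,v_{j(i)}))=d_G(u_{i'},u_i)=d_G(u_{i'},u)\le f(u_{i'})$, also by $(u_{i'},v_{j(i')})$ --- contradicting the efficiency of $g$.

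\textbf{Main obstacle.} The only subtle point is the behaviour of distances \emph{inside} an $H$-layer, where $d_{G\cdot H}$ saturates at $2$: a crude ``restrict $g$ to a layer'' argument would only certify that a cost-$1$ broadcast covers $N_H[v_{j(i)}]$, not all of $H^{(u_i)}$. This is exactly the gap Proposition~\ref{prop} closes (a unit broadcast forces $\operatorname{rad}(H)=1$ with that centre, hence the whole layer is covered), but the witness-vertex choices above are arranged to dodge the issue entirely. I therefore expect only routine bookkeeping beyond this; the substantive content is simply that efficiency of a broadcast on $G\cdot H$ makes its broadcasting vertices project injectively onto $V(G)$, after which domination and efficiency descend almost mechanically.
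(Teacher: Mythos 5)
Your proposal is correct and takes essentially the same route as the paper: both project the $p$-ELDB on $G\cdot H$ onto $G$ via the first coordinates of the broadcasting vertices and check that the projected broadcast is a $p$-ELDB. The paper's version is much terser and rests on the unqualified claim that a broadcasting vertex ``dominates the same set of vertices across all $G$-layers'' (which for a cost-$1$ vertex is only true after invoking Proposition~\ref{prop}), whereas you make the projection well-defined via Observation~\ref{obs_lex}, verify domination and efficiency explicitly through chosen witness vertices, and thereby sidestep that gap.
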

Next, we explore by fixing $G$ as $P_m$ and $C_m$ in $G \cdot H$. 
\begin{theorem} \label{pmh}
   $mcr(P_m \cdot H) = 1 $ if and only if $\operatorname{rad}(H)=1 $.
\end{theorem}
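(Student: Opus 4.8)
The plan is to prove both implications separately, using Proposition \ref{prop} for the forward direction and an explicit construction for the reverse.

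For the ($\Rightarrow$) direction, suppose $mcr(P_m \cdot H) = 1$, so $P_m \cdot H$ has a $1$-ELDB $f$. Since every vertex in $P_m \cdot H$ is dominated exactly once and $1$-ELDB assigns only values $0$ and $1$, the set of broadcasting vertices is nonempty, so there is some vertex $(u_i, v_j)$ with $f(u_i,v_j)=1$. By Proposition \ref{prop}, this forces $\operatorname{rad}(H)=1$. The only subtlety to address is that a $1$-ELDB must use at least one broadcasting vertex of positive value; this is immediate since $f \equiv 0$ dominates nothing.

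For the ($\Leftarrow$) direction, assume $\operatorname{rad}(H)=1$ with central vertex $v_c$, so $v_c$ is adjacent to every other vertex of $H$. I would construct a $1$-ELDB on $P_m \cdot H$ explicitly. Write $V(P_m) = \{u_1, \dots, u_m\}$ with $u_i \sim u_{i+1}$. The idea is to pick an efficient-domination-like pattern along the path and, in each chosen $P_m$-coordinate, broadcast from the central copy $(u_i, v_c)$ with cost $1$: such a vertex dominates $(u_i, v)$ for all $v \in V(H)$ (since $v_c$ is central in $H$, giving distance $\le 1$ within the $H$-layer) together with every vertex $(u_{i-1}, v)$ and $(u_{i+1}, v)$ for all $v \in V(H)$ (since $u_{i-1}, u_{i+1}$ are adjacent to $u_i$ in $P_m$, giving distance $1$ regardless of the $H$-coordinate). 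Hence a single broadcasting vertex $(u_i, v_c)$ with cost $1$ covers exactly the three consecutive $H$-layers over $u_{i-1}, u_i, u_{i+1}$, and these blocks of three are pairwise disjoint when the chosen indices are $3$ apart. So I would choose broadcasting vertices at $P_m$-coordinates forming a maximum $2$-packing pattern on $P_m$ — concretely, indices $\equiv 2 \pmod 3$, adjusting the last block as in Theorem for $P_n$ when $m \not\equiv 0 \pmod 3$ (using the block layouts $m = 2(5) + \dots$ analogues, or simply placing broadcasting vertices so every path vertex lies in exactly one block of three) — and broadcast cost $1$ from $(u_i, v_c)$ at each. This yields a $1$-limited dominating broadcast in which every vertex of $P_m \cdot H$ is dominated exactly once, so $mcr(P_m \cdot H) = 1$.

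The main obstacle is the reverse construction: one must verify that the chosen indices along $P_m$ genuinely partition $V(P_m)$ into consecutive blocks of sizes compatible with what a single cost-$1$ broadcast from a central $H$-copy can cover (blocks of exactly $3$, except possibly shorter end blocks of size $1$ or $2$ — but a cost-$1$ broadcast from $(u_i,v_c)$ always covers $u_{i-1},u_i,u_{i+1}$, so short end blocks need care: for a block of size $1$ at an endpoint the broadcasting vertex would spill onto the neighboring block, breaking the ``exactly once'' condition). This is exactly the obstruction that makes $mcr(C_7)=3$ rather than $1$ or $2$ in Theorem \ref{Cm}, but for paths the endpoint flexibility resolves it — I would handle it by using the same case analysis on $m \bmod 3$ as in the path theorem, noting that in a path one can always start the pattern so that blocks of size $1$ or $2$ occur only at the two ends and the ``exactly once'' condition is maintained there because an endpoint vertex $u_1$ is only adjacent to $u_2$. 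I expect this case check to be short; the rest is routine verification that distances behave as claimed in the lexicographic product.
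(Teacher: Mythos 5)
Your proof is correct and takes exactly the approach the paper's surrounding results suggest (the paper states this theorem with no proof of its own): Proposition \ref{prop} applied to any cost-$1$ broadcasting vertex of a $1$-ELDB gives the forward direction, and lifting an efficient dominating set of $P_m$ to the vertices $(u_i,v_c)$ gives the converse --- the end-block worry you raise is resolved simply by citing that $P_m$ is $1$-efficiently dominatable, which the paper has already recorded, since $N[(u_i,v_c)]=N_{P_m}[u_i]\times V(H)$ makes the lifted broadcast efficient whenever the underlying set is an EDS of $P_m$. The one caveat worth adding is that the statement implicitly requires $m\ge 2$ (as the theorem and corollary that follow it make explicit): for $m=1$ we have $P_1\cdot H=H$, Observation \ref{obs_lex} and Proposition \ref{prop} no longer apply, and for instance $mcr(P_1\cdot P_4)=mcr(P_4)=1$ while $\operatorname{rad}(P_4)=2$.
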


\begin{theorem}
    If $m \geq 2$ and $\operatorname{rad}(H) \neq 1$, $mcr(P_m \cdot H) =2 $ and $\gamma_{eb2}(P_m \cdot H) = 2 \ceil* {\frac{m}{5}} $
\end{theorem}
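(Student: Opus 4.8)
The plan is to reduce the claim to two tasks: (a) show $mcr(P_m\cdot H)\ge 2$, and (b) produce a $2$-ELDB of cost $2\lceil m/5\rceil$ together with a matching lower bound on the cost of every $2$-ELDB. Task (a) is immediate from Theorem~\ref{pmh}: since $\operatorname{rad}(H)\neq 1$, $P_m\cdot H$ is not $1$-efficiently dominatable, so $mcr(P_m\cdot H)\ge 2$, and any $2$-ELDB then forces $mcr(P_m\cdot H)=2$. Write $V(P_m)=\{u_1,\dots,u_m\}$ and call $H^{(u_i)}$ the $i$-th \emph{column}; we may assume $|V(H)|\ge 2$, since otherwise $P_m\cdot H\cong P_m$. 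Two distance facts in $P_m\cdot H$ will be used repeatedly: for $i\neq j$ one has $d\big((u_i,v),(u_j,w)\big)=|i-j|$ for all $v,w$ (a path through the $P_m$-coordinate attains it, and projecting to $P_m$ shows it is optimal), while for $v\neq w$ one has $d\big((u_i,v),(u_i,w)\big)\le 2$ (reroute through a neighbouring column, which exists as $m\ge 2$). Finally, $\operatorname{rad}(H)\neq 1$ together with $|V(H)|\ge 2$ means every vertex of $H$ has eccentricity at least $2$, hence a non-neighbour.

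The core is a rigidity statement for an arbitrary $2$-ELDB $f$ of $P_m\cdot H$. By Observation~\ref{obs_lex} each column carries at most one broadcasting vertex. No two columns carrying broadcasting vertices are consecutive, for such broadcasting vertices would be adjacent and hence each dominated both by itself and by the other. Next, every broadcasting vertex has cost exactly $2$: if $(u_i,v)\in V_f^+$ had $f(u_i,v)=1$, then a vertex $(u_i,w)$ with $d_H(v,w)\ge 2$ lies outside $N_f[(u_i,v)]$, and by the distance formula the only other broadcasting vertices that could reach it would sit in columns $i\pm 2$ with cost $2$ (columns $i\pm 1$ are excluded by the previous point, and columns farther away are out of range for $k=2$) — but any such vertex dominates \emph{all} of column $i$, in particular a vertex of $N_H[v]$ in column $i$ already dominated by $(u_i,v)$, contradicting efficiency; so $(u_i,w)$ would be left undominated. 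Consequently each broadcasting vertex, sitting in some column $i_l$ and having cost $2$, dominates exactly the columns $[i_l-2,i_l+2]\cap[1,m]$ in full and nothing else; domination forces the union of these intervals to be $[1,m]$ and efficiency forces them pairwise disjoint, which pins the broadcasting columns to $i_1<i_1+5<i_1+10<\cdots<i_s$ with $i_1\le 3$ and $i_s\ge m-2$. Hence $5(s-1)=i_s-i_1\ge (m-2)-3=m-5$, so $s\ge\lceil m/5\rceil$, and since every broadcasting vertex costs $2$ we get $\omega(f)=2s\ge 2\lceil m/5\rceil$.

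For the matching upper bound, pick $i_1\in\{1,2,3\}$, depending only on $m\bmod 5$, so that $i_s:=i_1+5(\lceil m/5\rceil-1)$ lies in $\{m-2,m-1,m\}$ (a short case check on the residue shows this choice always exists), and place a cost-$2$ broadcasting vertex — at an arbitrary vertex of the column — in each of the $\lceil m/5\rceil$ columns $i_1,i_1+5,\dots,i_s$. The associated distance-$2$ balls are exactly the intervals $[i_l-2,i_l+2]\cap[1,m]$; by the choice of $i_1$ and $i_s$ these are pairwise disjoint and cover $[1,m]$, and each dominates its own column together with the (at most) two columns on either side in full. Thus we obtain a $k$-limited dominating broadcast with $k=2$ in which every vertex is dominated by exactly one broadcasting vertex, i.e.\ a $2$-ELDB of cost $2\lceil m/5\rceil$, whence $\gamma_{eb2}(P_m\cdot H)=2\lceil m/5\rceil$ and $mcr(P_m\cdot H)=2$. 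The main obstacle is the middle step of the rigidity argument — forcing every broadcast value to equal $2$ — which is precisely where the hypothesis $\operatorname{rad}(H)\neq 1$ enters; the endpoint bookkeeping (the constraints $i_1\le 3$, $i_s\ge m-2$, and the small-$m$ cases) and the construction itself are then routine.
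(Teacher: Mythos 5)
Your proof is correct and follows essentially the same route as the paper: rule out cost-$1$ broadcasting vertices using $\operatorname{rad}(H)\neq 1$ (the paper cites Proposition~\ref{prop} for this; you re-derive it inline, in fact more carefully, since you also exclude a rescuing cost-$2$ vertex two columns away), observe that a cost-$2$ vertex dominates exactly the five surrounding $H$-layers and nothing else, and reduce to tiling the columns of $P_m$ by disjoint intervals of length five. The only difference is one of completeness: you supply the matching lower bound $\omega(f)\ge 2\lceil m/5\rceil$ and the explicit placement of the broadcasting columns, both of which the paper asserts without detail.
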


\begin{proof}
Let $V(G)=\{u_1, u_2, \dots u_m\}$ and $V(H)=\{v_1, v_2, \dots v_n\}$. We obtain $2$-$ELDB$ for $P_m \cdot H$ as follows.  By Proposition \ref{prop}, no vertex receives the cost $1$. If a vertex $(u_i,v_j)$ receives cost $2$, it dominates all the vertices of $H^{(u_k)}$, where $k \in\{i-2, i-1, i, i+1, i+2 \}$ and it dominates no other vertices. So, the problem reduces to dominating the vertices of $P_m$ using cost $2$. Since this can be done for any value of $m$ and $P_m$ can be efficiently dominated with the total cost of $2\ceil* {\frac{m}{5}}$, the theorem holds. 

\end{proof}

\begin{coro}
    If $m \geq 2$,
    $ mcr(P_m \cdot H)=\begin{cases}
{1} &\mbox{if $\operatorname{rad}(H)=1$ }\\
{2} &\mbox{otherwise}\\

\end{cases} $\\

\end{coro}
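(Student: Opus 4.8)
The plan is to read this corollary directly off Theorem~\ref{pmh} together with the theorem immediately preceding it, after recording the trivial fact that every connected graph $H$ has $\operatorname{rad}(H) \geq 1$, so that the two alternatives $\operatorname{rad}(H) = 1$ and $\operatorname{rad}(H) \neq 1$ are exhaustive and mutually exclusive.

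Concretely, first suppose $\operatorname{rad}(H) = 1$. Since $m \geq 2$, Theorem~\ref{pmh} applies, and its ``if'' direction yields $mcr(P_m \cdot H) = 1$, which is the first branch of the stated formula. (Recall also that $mcr(P_m \cdot H) \geq 1$ for every product, so $1$ is indeed the minimum possible value.) Next suppose $\operatorname{rad}(H) \neq 1$. Then, again using $m \geq 2$, the hypothesis of the theorem preceding this corollary is satisfied, and it gives $mcr(P_m \cdot H) = 2$, which is the second branch. Since the two cases partition all possibilities for $H$, the corollary follows.

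There is essentially no obstacle here: the entire mathematical content has already been established in the two cited theorems, and the only thing to verify — that their hypotheses jointly cover every $H$ and do not conflict on any overlap — is immediate because $\operatorname{rad}(H) = 1$ and $\operatorname{rad}(H) \neq 1$ are complementary conditions. The restriction $m \geq 2$ is precisely what is needed for both theorems to be in force; for $m = 1$ one has $P_1 \cdot H = H$, and $mcr(H) = 1$ can occur even when $\operatorname{rad}(H) > 1$ (for instance $H = P_6$), so the dichotomy would genuinely fail in that degenerate case.
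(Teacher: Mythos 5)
Your proposal is correct and is exactly the derivation the paper intends: the corollary is stated without proof precisely because it is the conjunction of Theorem~\ref{pmh} (the $\operatorname{rad}(H)=1$ case) and the immediately preceding theorem (the $\operatorname{rad}(H)\neq 1$, $m\geq 2$ case), and your observation that the two hypotheses are complementary is all that needs checking.
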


It can be noted that if $f$ is a $k$-$ELDB$ of a graph $G$ such that $f(u) \neq 1, \forall u \in V(G)$, then $f$ restricted to $G^{(v_1)}$ is a $k$-$ELDB$ of $G \cdot H$. We call a graph $G$, a $1$-free if it possesses a $k$-$ELDB$ $f$, such that $f(u) \neq 1, \forall u \in V(G)$. For example, $P_n$ are $1$-free  graphs as any $P_n$ can be efficiently dominated with cost $2$ alone. We call $P_n$ is $[2]-\{1\}$ efficiently dominatable. Every graph $G$ is $[\operatorname{rad}]-\{1,2, \dots , \operatorname{rad}-1\}$ efficiently dominatable.  

\begin{theorem}
   If a graph $G$ is $[k]-\{1\}$-efficiently dominatable,  then $G \cdot H $ is $[k]-\{1\}$-efficiently dominatable
\end{theorem}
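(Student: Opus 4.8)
The plan is to lift a cost-free (i.e. no cost-$1$) efficient broadcast from $G$ to the product $G\cdot H$ by placing all broadcasting vertices in a single $H$-layer. Concretely, suppose $f$ is a $k$-ELDB of $G$ with $f(u)\neq 1$ for every $u\in V(G)$. Fix the vertex $v_1\in V(H)$ and define $g$ on $V(G\cdot H)$ by $g(u_i,v_1)=f(u_i)$ for all $i$ and $g(u_i,v_j)=0$ for $j\neq 1$. I claim $g$ is a $k$-ELDB of $G\cdot H$; since $g$ uses only the values $f$ uses, it is automatically bounded by $k$ and uses no cost $1$, so this also gives the $[k]-\{1\}$ conclusion.

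The key step is the distance computation: for $(u_i,v_1)$ a broadcasting vertex with $g(u_i,v_1)=\ell\geq 2$, I need to determine exactly which vertices of $G\cdot H$ it dominates. The guiding fact (already used in the $P_m\cdot H$ arguments and justified by Definition~\ref{def_lex}) is that in $G\cdot H$, for $\ell\geq 2$ one has $d\big((u_i,v_1),(u_p,v_q)\big)\leq \ell$ if and only if $d_G(u_i,u_p)\leq \ell$ — the second coordinate is irrelevant because any step that changes the first coordinate already reaches the entire $H$-layer, and when $u_p=u_i$ the distance to $(u_i,v_q)$ is at most $2\leq\ell$. Hence $N_g[(u_i,v_1)] = \{(u_p,v_q): d_G(u_i,u_p)\leq \ell, \ q\in[n]\}$, i.e. the broadcast from $(u_i,v_1)$ covers precisely the union of $G$-layers over $N_f[u_i]$ in $G$. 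Consequently a vertex $(u_p,v_q)$ is $g$-dominated by $(u_i,v_1)$ exactly when $u_p$ is $f$-dominated by $u_i$ in $G$, and this happens for exactly one $i$ because $f$ is a $k$-ELDB of $G$. Therefore every vertex of $G\cdot H$ is dominated exactly once, so $g$ is a $k$-ELDB, and since $g$ never takes the value $1$, $G\cdot H$ is $[k]-\{1\}$-efficiently dominatable.

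The main obstacle is making the distance claim fully rigorous, in particular the edge case $\ell=2$ together with small $G$: I must verify that a broadcasting vertex of cost exactly $2$ dominates its entire $H$-layer and the $H$-layers of its $G$-neighbours but nothing further, and that no two such broadcast balls overlap — this is exactly the $2$-packing-type condition inherited from $f$. One must also check that the definition of a $k$-limited broadcast is not violated, i.e. $g(u_i,v_1)\leq\operatorname{ecc}(u_i,v_1)$ in $G\cdot H$; this is immediate since $\operatorname{ecc}_{G\cdot H}(u_i,v_1)\geq\operatorname{ecc}_G(u_i)\geq f(u_i)$ (adding the factor $H$ cannot decrease eccentricity when $|V(H)|\geq 1$), with the trivial case $|V(H)|=1$ reducing to $G$ itself. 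Once the distance lemma is in hand, the bijection between dominations in $G$ and dominations in $G\cdot H$ is purely formal, so I would state the distance fact as a short preliminary observation and then give the lifting construction in one paragraph.
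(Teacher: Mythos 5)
Your proposal is correct and follows essentially the same route as the paper, which disposes of this theorem via the preceding remark that a $1$-free $k$-ELDB $f$ of $G$, placed on the single layer $G^{(v_1)}$, lifts to a $k$-ELDB of $G\cdot H$ (using the fact that for cost $\ell\geq 2$ a broadcasting vertex in $G\cdot H$ covers exactly the $H$-layers over its distance-$\ell$ ball in $G$, since $d((u_i,v_j),(u_i,x))\leq 2$). Your writeup is in fact more careful than the paper's, which leaves the distance lemma and the disjointness of the lifted balls implicit.
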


\begin{coro} \label{clex}
     If $k$ is the minimum value for which a $1$-free graph $G$ is $[k]-\{1\}$ efficiently dominatable,  then $mcr(G \cdot H) = k$.  
\end{coro}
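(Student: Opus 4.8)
The plan is to prove the two inequalities $mcr(G \cdot H) \le k$ and $mcr(G \cdot H) \ge k$ separately. The upper bound is immediate from the preceding theorem: since $G$ is $1$-free and $[k]$-$\{1\}$-efficiently dominatable, that theorem gives that $G \cdot H$ is $[k]$-$\{1\}$-efficiently dominatable, hence in particular $k$-efficiently dominatable, so $mcr(G \cdot H) \le k$. (Concretely, one takes a $k$-ELDB $f$ on $G$ avoiding the value $1$ and restricts it to the layer $G^{(v_1)}$; by the remark immediately before the theorem this is a $k$-ELDB of $G \cdot H$.)

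For the lower bound, I would argue by contradiction: suppose $mcr(G \cdot H) = p < k$ and let $g$ be a $p$-ELDB of $G \cdot H$. Project $g$ down to $G$ by the construction already used in the proof of Theorem \ref{pmh}-style arguments (the $mcr(G\cdot H)\ge mcr(G)$ theorem): whenever $g(u_i, v_j) = \ell \ge 1$ for some/the unique broadcasting vertex in the layer $H^{(u_i)}$ (unique by Observation \ref{obs_lex}), set $f(u_i) = \ell$, and set $f(u_i) = 0$ otherwise. One checks that $f$ is a $p$-ELDB of $G$: a vertex $(u_i, v_j)$ receiving cost $\ell$ in $G \cdot H$ dominates exactly the layers $H^{(u_q)}$ for $u_q$ at distance $\le \ell$ from $u_i$ in $G$ (when $\ell \ge 2$), matching the domination pattern of a cost-$\ell$ vertex in $G$, so efficiency transfers. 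The key extra point is that $f$ uses \emph{no} cost-$1$ vertex: by Proposition \ref{prop}, if some $g(u_i, v_j) = 1$ then $\operatorname{rad}(H) = 1$; but $G$ is assumed $1$-free precisely so that this degenerate situation is excluded from consideration — more carefully, since $p < k$ and $k$ is the \emph{minimum} value for which $G$ admits a $[k]$-$\{1\}$-efficient domination, $f$ being a $p$-ELDB of $G$ with no cost-$1$ vertex would contradict that minimality. Thus the only remaining gap is ruling out that $f$ secretly uses cost $1$; if $\operatorname{rad}(H) \ne 1$ this is automatic from Proposition \ref{prop}, and if $\operatorname{rad}(H) = 1$ then $G \cdot H$ collapses enough that one must handle it as a small separate case, which I expect to be the main technical obstacle.

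More precisely, the delicate part is the case $\operatorname{rad}(H) = 1$: then cost-$1$ broadcasting vertices \emph{can} appear in $G \cdot H$, and the naive projection might produce an $f$ on $G$ that uses cost $1$, which would not contradict the minimality of $k$ for $[k]$-$\{1\}$-efficient domination. I would resolve this by observing that when $\operatorname{rad}(H) = 1$, any cost-$1$ vertex $(u_i, v_j)$ of $g$ must have $v_j$ a universal vertex of $H$ and dominates exactly the closed neighborhood $N_G[u_i]$ within its layer structure; one can then replace such local cost-$1$ behavior in the projected function, or alternatively note that the hypothesis "$G$ is $1$-free" together with "$k$ minimal" already forces $k = mcr(G)$ by definition, and invoke the earlier theorem $mcr(G \cdot H) \ge mcr(G) = k$ directly. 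In fact this last observation is cleanest: since $G$ is $1$-free, $mcr(G)$ equals the minimum $k$ for which $G$ is $[k]$-$\{1\}$-efficiently dominatable (a $1$-free graph's optimal ELDB avoids cost $1$ by definition of $1$-free, so the two notions of minimum coincide), and then $mcr(G \cdot H) \ge mcr(G) = k$ is exactly the earlier theorem. Combining with the upper bound yields $mcr(G \cdot H) = k$.
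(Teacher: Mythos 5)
Your overall strategy (upper bound from the preceding theorem, lower bound by projecting a hypothetical $p$-ELDB of $G\cdot H$ with $p<k$ down to $G$ and invoking Proposition \ref{prop} to rule out cost-$1$ broadcasting vertices) is sound and is essentially the argument the paper leaves implicit, but your resolution of the case $\operatorname{rad}(H)=1$ contains a genuine error. You claim that for a $1$-free graph the ``two notions of minimum coincide,'' i.e.\ that $mcr(G)$ equals the least $k$ for which $G$ is $[k]-\{1\}$-efficiently dominatable, ``since a $1$-free graph's optimal ELDB avoids cost $1$ by definition.'' That is not what $1$-free means: the definition only requires that \emph{some} $k$-ELDB for \emph{some} $k$ avoids the value $1$, not that an optimal one does. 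The paper's own example refutes your claim: $P_n$ is $1$-free with minimum $[k]-\{1\}$ value $k=2$, yet $mcr(P_n)=1$ via a broadcast using only cost $1$. Hence the fallback $mcr(G\cdot H)\ge mcr(G)$ yields only $mcr(G\cdot H)\ge mcr(G)$, which may be strictly smaller than $k$, and your Route B does not close the gap.

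In fact no argument can close it, because the statement fails as written when $\operatorname{rad}(H)=1$: take $G=P_3$ and $H=K_2$. Then $G$ is $1$-free with minimal $[k]-\{1\}$ value $k=2$ (an endpoint of cost $2$), but Theorem \ref{pmh} gives $mcr(P_3\cdot K_2)=1\ne 2$. So the corollary implicitly requires $\operatorname{rad}(H)\ne 1$, and under that hypothesis your Route A is already complete: Proposition \ref{prop} forbids any cost-$1$ broadcasting vertex in a $p$-ELDB of $G\cdot H$, the projection (using Observation \ref{obs_lex} and the fact that a cost-$\ell\ge 2$ vertex dominates whole $H$-layers) produces a $p$-ELDB of $G$ avoiding cost $1$ with $p<k$, contradicting the minimality of $k$. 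You should state the $\operatorname{rad}(H)\ne 1$ hypothesis explicitly and delete the incorrect ``the two minima coincide'' shortcut.
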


\begin{coro}
    If a graph $G$ is $1$-efficiently dominatable, then $mcr(G \cdot H) \geq 2 $ if and only if $\operatorname{rad}(H) \neq 1$
\end{coro}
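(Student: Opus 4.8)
The plan is to establish the two implications separately, drawing on Proposition \ref{prop} for one direction and on an explicit broadcast for the other.

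\emph{Sufficiency of $\operatorname{rad}(H)\neq 1$.} Assuming $\operatorname{rad}(H)\neq 1$, I would show that $G\cdot H$ admits no $1$-ELDB, so that $mcr(G\cdot H)\geq 2$. Suppose, for contradiction, that $f$ is a $1$-ELDB of $G\cdot H$. Since $f$ takes values in $\{0,1\}$, every broadcasting vertex $v$ satisfies $f(v)=1$; and a dominating broadcast on a nonempty graph has at least one broadcasting vertex, so there is a vertex $(u_i,v_j)$ with $f(u_i,v_j)=1$. Proposition \ref{prop} then forces $\operatorname{rad}(H)=1$, a contradiction. Note this half does not use the hypothesis that $G$ is $1$-efficiently dominatable.

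\emph{Necessity.} Here I would prove the contrapositive: if $\operatorname{rad}(H)=1$, then $mcr(G\cdot H)=1$. Since $\operatorname{rad}(H)=1$, the graph $H$ has a universal vertex, which (relabelling the vertices of $H$ if necessary) we take to be $v_1$; and since $G$ is $1$-efficiently dominatable, let $S$ be an efficient dominating set of $G$. Define $f$ on $G\cdot H$ by $f(u_i,v_1)=1$ for every $u_i\in S$, and $f\equiv 0$ otherwise. The crucial step is a distance characterization: for any vertex $(u_p,v_q)$ of $G\cdot H$ and any $u_i\in S$, one has $d\big((u_i,v_1),(u_p,v_q)\big)\leq 1$ if and only if $u_i\in N_G[u_p]$. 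Indeed, by Definition \ref{def_lex}, $d\le 1$ forces either $u_iu_p\in E(G)$ or $u_i=u_p$, and conversely $u_i=u_p$ yields $d\le 1$ since $v_1$ is universal in $H$. As $S$ is an EDS, $|N_G[u_p]\cap S|=1$ for every $u_p$, so each vertex $(u_p,v_q)$ is dominated by exactly one broadcasting vertex; hence $f$ is a $1$-ELDB and $mcr(G\cdot H)=1$.

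Combining the two halves gives the stated equivalence. I do not expect a serious obstacle; the only point demanding care is the necessity direction, where one must simultaneously verify that the constructed $f$ dominates every vertex and that it does so uniquely, but both statements collapse to the property $|N_G[u_p]\cap S|=1$ of an efficient dominating set once the distance characterization above is in hand. It is also worth recording, for the sufficiency direction, the elementary fact that every dominating broadcast has at least one broadcasting vertex, since that is exactly what licenses the appeal to Proposition \ref{prop}.
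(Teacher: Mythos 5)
Your argument is correct and follows the route the paper intends: the paper states this as an unproved corollary sitting directly after Proposition \ref{prop}, which supplies exactly your forward direction (a $1$-ELDB must have a broadcasting vertex of cost $1$, forcing $\operatorname{rad}(H)=1$), and the converse is the standard lifting of an efficient dominating set of $G$ into the layer $G^{(v_1)}$ over a universal vertex $v_1$ of $H$, with uniqueness of domination reducing to $|N_G[u_p]\cap S|=1$ as you observe. The only caveat, which is an imprecision inherited from the paper's own statement rather than a flaw in your reasoning, is the degenerate case $H=K_1$ (where $\operatorname{rad}(H)=0\neq 1$ yet $G\cdot H\cong G$ has $mcr=1$); implicitly $H$ is assumed to have at least two vertices.
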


\begin{theorem}
$mcr(C_m \cdot H) = 1$ if and only if $m \equiv 0 (mod 3)$ and $\operatorname{rad}(H)=1$.\\ And in all other cases,\\
$mcr(C_m \cdot H)=\begin{cases}
 
{2} &\mbox{if $m \equiv 0(mod 5) $ and $m\in\{3,4\}$  }\\
{4} &\mbox{if $m\in\{9,16,18,23$\}}\\
{5} &\mbox{if $m=11$}\\
{6} &\mbox{if $m=13$}\\
{3} &\mbox{otherwise}\\
\end{cases} $\\
\end{theorem}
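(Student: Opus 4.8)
The plan is to separate the "if and only if" characterization of $mcr(C_m \cdot H) = 1$ from the case analysis on all remaining values. For the first part, by Proposition \ref{prop}, assigning cost $1$ to any vertex forces $\operatorname{rad}(H) = 1$, so we may freely restrict attention to $H$ with a universal vertex $v_j$; in that situation a cost-$1$ broadcast at $(u_i, v_j)$ dominates exactly the three consecutive $G$-layers $H^{(u_{i-1})}, H^{(u_i)}, H^{(u_{i+1})}$ and nothing else. Thus a $1$-$ELDB$ of $C_m \cdot H$ exists precisely when $C_m$ admits a partition of its vertex set into triples of consecutive vertices — i.e. $m \equiv 0 \pmod 3$ — matching Theorem \ref{Cm}. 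This gives one direction; the converse (if $m \not\equiv 0$ or $\operatorname{rad}(H) \neq 1$, then no $1$-$ELDB$) follows from the same layer-domination argument plus Proposition \ref{prop}.

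For the remaining cases I would reduce the problem, as in the earlier $P_m \cdot H$ results, to an efficient broadcast-covering problem on the cycle $C_m$ alone. The key reduction: if $f$ is a $k$-$ELDB$ of $C_m \cdot H$ that uses no cost $1$, then a vertex $(u_i, v_j)$ with $f(u_i,v_j) = \ell \geq 2$ dominates exactly the $2\ell + 1$ consecutive $G$-layers centered at $u_i$ (and no partial layers), so $f$ projects down to an efficient $k$-limited dominating broadcast of $C_m$ using only costs $\geq 2$, and conversely any such broadcast of $C_m$ lifts (restricting to a single $H$-layer, as in the $[k]-\{1\}$ machinery and Corollary \ref{clex}) to one of $C_m \cdot H$. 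Hence for $m \not\equiv 0 \pmod 3$ or $\operatorname{rad}(H) \neq 1$, $mcr(C_m \cdot H)$ equals the minimum $k \geq 2$ such that $C_m$ can be tiled by "blocks" of odd sizes $\{5, 7, 9, \dots, 2k+1\}$ (a cost-$\ell$ vertex contributing a block of length $2\ell+1$) with the block lengths summing to $m$. So the whole problem becomes: for which $m$ is $m$ expressible as a sum of odd integers each at least $5$ and at most $2k+1$, and what is the least such $k$?

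The combinatorial heart is therefore an elementary but careful number-theoretic classification. Writing $m = \sum (2\ell_i + 1)$ with each $\ell_i \geq 2$, one checks: $k = 2$ works iff $m$ is a nonnegative integer combination of $5$'s, i.e. $m \equiv 0 \pmod 5$ (plus the degenerate small cases $m \in \{3,4\}$ handled separately, where $C_m \cdot H$ is small enough that a single cost-$2$ broadcast covers everything); $k = 3$ adds blocks of size $7$ and lets one write most $m$ as $5a + 7b$, which by the Chicken McNugget theorem covers all $m \geq 24$ and many smaller values, the exceptions being exactly those $m$ that are neither $\equiv 0 \pmod 5$ nor of the form $5a+7b$; among those exceptions one then sees which require size-$9$ blocks ($k=4$: $m \in \{9,16,18,23\}$), size-$11$ ($k=5$: $m = 11$), or size-$13$ ($k=6$: $m = 13$). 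I expect the main obstacle to be getting this residue-and-small-case bookkeeping exactly right — in particular, verifying that each listed exceptional $m$ genuinely cannot be tiled with smaller blocks (a finite check) while simultaneously confirming that every $m$ not in the exceptional list does admit a tiling with blocks of size at most $7$, and taking care that the reduction step remains valid for the small cycles $C_3, C_4$ where "consecutive triples/blocks" wrap around and the layer-domination picture can degenerate.
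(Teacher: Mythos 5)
Your proposal follows essentially the same route as the paper: after ruling out cost-$1$ broadcasting vertices via Proposition \ref{prop}, the paper's (far terser) proof likewise reduces the problem to writing $m$ as a repeated sum of the block sizes $5,7,9,11,13$ on $C_m$, using the smaller summands whenever possible. Your extra care with the $mcr(C_m\cdot H)=1$ characterization and with the wrap-around degeneracies for small $m$ only makes explicit what the paper leaves implicit.
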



 



\begin{proof}

    Since no vertex receives cost $1$, and vertices receiving cost $2,3,4,5,6$ dominates $5,7,9,11,13$ number of vertices on $C_m$, the problem can be viewed as a number theory question: write $m$ as repeated summation of $5,7,9,11,13$ such that use smaller numbers as much as possible. Hence, the theorem holds.

\end{proof}

\subsubsection{Strong Product of graphs}
\begin{definition} \cite{imrich2000}\label{def_str}
    A Strong product of two graphs $G$ and $H$ is denoted by $G\boxtimes H$, whose vertex set is $V(G) \times V(H)$, in which two vertices $(u,v)$ and $(u', v')$ are adjacent if \\\\
(i) $u=u'$ and $vv' \in E(H)$, or $v=v'$ and $uu' \in E(G)$ \\
(ii) $uu' \in E(G) $ and $vv' \in E(H)$.

\end{definition}
Throughout this section, we use $u_i's$ and $v_j's$ for vertices of $G$ and $H$, respectively. $G^{(v_j)}$ and $H^{(u_i)}$ are defined analogously to that defined in the above section of the lexicographic product of graphs.

\begin{theorem}\cite{imrich2000}\label{dis_str}
The distance betweeen two vertices $(u,v)$ and $(u',v')$ in the graph $G\boxtimes H$ is $max\{d(u,u'),d(v,v')\}$.
\end{theorem}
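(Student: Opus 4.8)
The statement is the classical distance formula for the strong product, so the plan is simply to verify the two inequalities $d_{G\boxtimes H}((u,v),(u',v'))\ge \max\{d_G(u,u'),d_H(v,v')\}$ and $d_{G\boxtimes H}((u,v),(u',v'))\le \max\{d_G(u,u'),d_H(v,v')\}$ separately. For the lower bound I would exploit the fact that the two coordinate projections are distance-non-increasing in the relevant sense: if $(a,b)$ and $(a',b')$ are adjacent in $G\boxtimes H$, then by Definition \ref{def_str} we always have $a=a'$ or $aa'\in E(G)$, and likewise $b=b'$ or $bb'\in E(H)$, so $d_G(a,a')\le 1$ and $d_H(b,b')\le 1$. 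Consequently, given any walk $(u,v)=(a_0,b_0),(a_1,b_1),\dots,(a_\ell,b_\ell)=(u',v')$ of length $\ell$ in $G\boxtimes H$, the first-coordinate sequence $a_0,a_1,\dots,a_\ell$ is a walk (with repetitions permitted) from $u$ to $u'$ in $G$, so $d_G(u,u')\le \ell$, and symmetrically $d_H(v,v')\le \ell$. Applying this with $\ell=d_{G\boxtimes H}((u,v),(u',v'))$ gives $\max\{d_G(u,u'),d_H(v,v')\}\le d_{G\boxtimes H}((u,v),(u',v'))$.

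For the upper bound I would construct an explicit walk of the required length. Set $p=d_G(u,u')$ and $q=d_H(v,v')$ and assume without loss of generality $p\ge q$. Fix a shortest path $u=x_0,x_1,\dots,x_p=u'$ in $G$ and a shortest path $v=y_0,y_1,\dots,y_q=v'$ in $H$, and pad the latter by setting $y_j=v'$ for $q\le j\le p$. Then $(x_0,y_0),(x_1,y_1),\dots,(x_p,y_p)$ is a walk in $G\boxtimes H$ joining $(u,v)$ to $(u',v')$: for each $i$ we have $x_ix_{i+1}\in E(G)$ (consecutive vertices of a geodesic are distinct and adjacent), while $y_iy_{i+1}\in E(H)$ if $i<q$ and $y_i=y_{i+1}$ if $i\ge q$, so in either case the pair $((x_i,y_i),(x_{i+1},y_{i+1}))$ satisfies condition (i) or (ii) of Definition \ref{def_str}. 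This walk has length $p=\max\{d_G(u,u'),d_H(v,v')\}$, giving the matching upper bound; combining the two bounds yields the claimed equality.

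The only step that genuinely requires care — and the closest thing to an obstacle here — is the padding step: one must check that the strong product really admits an edge in which one coordinate is held fixed while the other advances by one, which is precisely what clause (i) of Definition \ref{def_str} provides. (Without that clause, as in the tensor/categorical product, the formula fails, so this is where the structure of $G\boxtimes H$ is actually used.) Everything else is routine bookkeeping on walks and projections, so no deeper difficulty is anticipated.
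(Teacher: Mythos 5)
The paper does not prove this statement; it is quoted directly from \cite{imrich2000} as a known result, so there is no in-paper argument to compare against. Your proof is correct and is the standard one: the projection argument gives the lower bound (noting that consecutive repeated vertices in the projected sequence can be deleted to obtain a genuine walk of length at most $\ell$), and the padded-geodesic construction, which relies exactly on clause (i) of Definition \ref{def_str} to hold one coordinate fixed, gives the matching upper bound. Your closing remark correctly identifies where the strong-product structure is essential, since the formula indeed fails for the tensor product.
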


\begin{coro}\label{dom_str}
    If $f(u_i,v_j)=d$ and if $d_H(v_l, v_j) \leq d$, then $(u_i,v_j)$ dominates vertex $(u_p,v_j)$ if and only if it dominates the vertex $(u_p, v_l)$.
\end{coro}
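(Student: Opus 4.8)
The plan is to reduce both domination conditions to a single inequality in the factor $G$, using the distance formula for the strong product stated in Theorem~\ref{dis_str}. Recall that a broadcasting vertex $w$ with $f(w)=d$ dominates a vertex $x$ exactly when $d(w,x)\le d$. So I would begin by unwinding the definition of domination for the two vertices $(u_p,v_j)$ and $(u_p,v_l)$ in turn.

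First, applying Theorem~\ref{dis_str} gives $d\big((u_i,v_j),(u_p,v_j)\big)=\max\{d_G(u_i,u_p),\,d_H(v_j,v_j)\}=d_G(u_i,u_p)$, since $d_H(v_j,v_j)=0$. Hence $(u_i,v_j)$ dominates $(u_p,v_j)$ if and only if $d_G(u_i,u_p)\le d$.

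Next I would treat $(u_p,v_l)$ the same way: $d\big((u_i,v_j),(u_p,v_l)\big)=\max\{d_G(u_i,u_p),\,d_H(v_j,v_l)\}$. The hypothesis $d_H(v_l,v_j)\le d$ says the second term in this maximum is already at most $d$, so the maximum is at most $d$ precisely when $d_G(u_i,u_p)\le d$. Thus $(u_i,v_j)$ dominates $(u_p,v_l)$ if and only if $d_G(u_i,u_p)\le d$ — the very same condition obtained above. Combining the two equivalences proves the corollary.

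I do not anticipate a genuine obstacle: the whole argument is a direct substitution of the strong-product metric, and the only point requiring minor care is to invoke the hypothesis $d_H(v_l,v_j)\le d$ in exactly the right place, namely to discard the $H$-coordinate contribution from the maximum. No case analysis, and no estimate beyond comparing a maximum of two quantities against a bound, is needed.
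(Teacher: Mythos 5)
Your proof is correct and is exactly the argument the paper intends: the corollary is stated as an immediate consequence of Theorem~\ref{dis_str}, and your reduction of both domination conditions to $d_G(u_i,u_p)\le d$ via the $\max$ formula (using the hypothesis $d_H(v_l,v_j)\le d$ to discard the $H$-coordinate) is the direct verification the paper omits.
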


\begin{theorem}\label{mcr_str}
    $mcr(G\boxtimes H)\geq max\{mcr(G), mcr(H)\}$
\end{theorem}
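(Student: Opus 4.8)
The plan is to exploit the commutativity of the strong product, $G\boxtimes H\cong H\boxtimes G$: it suffices to prove $mcr(G\boxtimes H)\ge mcr(G)$, and running the same argument on $H\boxtimes G$ then yields $mcr(G\boxtimes H)\ge mcr(H)$, which gives the theorem. So I would put $p=mcr(G\boxtimes H)$, fix a $p$-$ELDB$ $g$ of $G\boxtimes H$, and set out to manufacture a $p$-$ELDB$ $f$ of $G$; since $mcr(G)$ is the least $k$ admitting a $k$-$ELDB$ on $G$, producing such an $f$ forces $mcr(G)\le p$.

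First I would fix an arbitrary vertex $v^{*}\in V(H)$ and work relative to the $G$-layer $G^{(v^{*})}$, which by Theorem \ref{dis_str} is an isometric copy of $G$ inside $G\boxtimes H$. The first real step is a uniqueness claim: for each $u_i\in V(G)$ there is \emph{at most one} broadcasting vertex of $g$ lying in the $H$-layer $H^{(u_i)}$ whose closed $g$-neighbourhood $N_g[\cdot]$ meets $G^{(v^{*})}$. Indeed, if $(u_i,v_j)$ and $(u_i,v_{j'})$ were two such vertices, then $d_H(v_j,v^{*})\le g(u_i,v_j)$ and $d_H(v_{j'},v^{*})\le g(u_i,v_{j'})$, and since $d\big((u_i,v_j),(u_i,v^{*})\big)=d_H(v_j,v^{*})$ (and similarly for $v_{j'}$) by Theorem \ref{dis_str}, the vertex $(u_i,v^{*})$ would be dominated twice, contradicting efficiency of $g$. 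I would denote this broadcasting vertex, when it exists, by $(u_i,\nu(u_i))$, and set $c(u_i)=g(u_i,\nu(u_i))\in\{1,\dots,p\}$.

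Then I would define $f\colon V(G)\to\{0,1,\dots,p\}$ by $f(u_i)=c(u_i)$ if $(u_i,\nu(u_i))$ exists and $f(u_i)=0$ otherwise, and verify the two required properties. For domination, take any $u_k\in V(G)$; the vertex $(u_k,v^{*})$ is $g$-dominated by a unique broadcasting vertex $(u_i,v_j)$, so $\max\{d_G(u_i,u_k),d_H(v_j,v^{*})\}\le g(u_i,v_j)$. Thus $u_i$ falls under the uniqueness claim with $\nu(u_i)=v_j$, so $f(u_i)=g(u_i,v_j)\ge 1$ and $d_G(u_i,u_k)\le f(u_i)$, i.e. $u_k$ is $f$-dominated. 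For efficiency, suppose $u_k$ were $f$-dominated by distinct $u_i,u_{i'}$, so $d_G(u_i,u_k)\le f(u_i)=c(u_i)$ and $d_G(u_{i'},u_k)\le f(u_{i'})=c(u_{i'})$, while by construction also $d_H(\nu(u_i),v^{*})\le c(u_i)$ and $d_H(\nu(u_{i'}),v^{*})\le c(u_{i'})$. By Theorem \ref{dis_str} this gives $d\big((u_k,v^{*}),(u_i,\nu(u_i))\big)\le c(u_i)=g(u_i,\nu(u_i))$ and likewise for $(u_{i'},\nu(u_{i'}))$, so $(u_k,v^{*})$ would be $g$-dominated by two distinct broadcasting vertices, contradicting efficiency of $g$. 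Hence $f$ is a $p$-$ELDB$ of $G$, so $mcr(G)\le p$, and by the symmetry noted above $mcr(H)\le p$ as well.

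The obstacle I anticipate is conceptual rather than computational: one cannot simply restrict $g$ to the layer $G^{(v^{*})}$, since the broadcasting vertices covering that layer may sit in other $G$-layers. The substance of the argument is the \emph{projection} of each relevant broadcasting vertex onto $G$ through its first coordinate, together with the checks that this projection is single-valued (the uniqueness claim) and efficiency-preserving — and both of these collapse, via the max-distance formula of Theorem \ref{dis_str}, to the efficiency of $g$ read off at the anchor vertices $(u_i,v^{*})$. The remaining bookkeeping, namely that $f$ uses only values in $\{0,\dots,p\}$ and that $V^{+}_{f}$ is nonempty, is immediate.
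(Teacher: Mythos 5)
Your proposal is correct and follows essentially the same route as the paper: fix a $G$-layer $G^{(v^{*})}$ and project every broadcasting vertex that reaches it onto that layer through its first coordinate with the same cost, using the max-distance formula of Theorem \ref{dis_str} (the paper invokes its Corollary \ref{dom_str}) to see that domination and efficiency are preserved. The only difference is one of care, not of method: you explicitly verify that the projection is single-valued (your uniqueness claim) and that the resulting broadcast is efficient, two points the paper's proof asserts without detail.
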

\begin{proof}
Without loss of generality, assume $ mcr(G) \geq mcr(H) $. Let $ mcr(G) = k $. Assume the contrary, that is $ mcr(G \boxtimes H) = l < k $. Consider the factor $ G^{(v_1)} $, corresponding to a vertex $ v_1 $ of $ H $. In $ G \boxtimes H $, if a vertex $ (u_i, v_j), j \neq 1 $  receives a cost $ d $ and dominates set of vertices $S$ of $ G^{(v_1)} $, assign the same cost $ d $ to $ (u_i, v_1) $ and $0$ to $ (u_i, v_j)$. By Corollary \ref{dom_str}, $ (u_i, v_1) $ dominates the same set $S$ of vertices of $ G^{(v_1)}$. Repeat this process for every broadcasting vertex $ (u_i, v_j), j \neq 1 $, that dominate one or more vertices of $ G^{(v_1)} $. This construction yields an $ l $-$ELDB$ for $ G^{(v_1)} $, contradicting the assumption that $ mcr(G) = k > l $.
\end{proof}

\begin{theorem} \label{ed_str} \cite{abay2009}
If $G$ and $H$ are efficiently dominatable, then $G \boxtimes H$ is efficiently dominatable.
\end{theorem}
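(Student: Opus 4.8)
The plan is to produce an explicit efficient dominating set of $G\boxtimes H$ as a set-product of efficient dominating sets of the two factors. First I would fix an \textit{EDS} $S_G$ of $G$ and an \textit{EDS} $S_H$ of $H$ (these exist by hypothesis) and put
\[
S = S_G \times S_H = \{(u,v): u\in S_G,\ v\in S_H\}.
\]
The goal is then to verify that $|N[(a,b)]\cap S| = 1$ for every vertex $(a,b)$ of $G\boxtimes H$, which is exactly the condition for $S$ to be an \textit{EDS}, hence for $G\boxtimes H$ to be efficiently dominatable.

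The one structural fact I would establish first is that closed neighborhoods in the strong product split as a product:
\[
N_{G\boxtimes H}[(a,b)] = N_G[a]\times N_H[b].
\]
This is immediate from the distance formula of Theorem~\ref{dis_str}: $(a',b')\in N_{G\boxtimes H}[(a,b)]$ iff $\max\{d_G(a,a'), d_H(b,b')\}\le 1$, i.e. iff $a'\in N_G[a]$ and $b'\in N_H[b]$.

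Given this factorization, the verification is a short computation:
\[
N_{G\boxtimes H}[(a,b)]\cap S = \bigl(N_G[a]\times N_H[b]\bigr)\cap\bigl(S_G\times S_H\bigr) = \bigl(N_G[a]\cap S_G\bigr)\times\bigl(N_H[b]\cap S_H\bigr),
\]
and since $S_G$ and $S_H$ are efficient dominating sets, each factor on the right has exactly one element, so the intersection has cardinality $1\cdot 1 = 1$. Thus every vertex of $G\boxtimes H$ is dominated by a unique member of $S$, so $S$ is an \textit{EDS} and $G\boxtimes H$ is efficiently dominatable.

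I do not expect a genuine obstacle here: the whole argument hinges on the closed-neighborhood factorization, which is forced by the strong-product metric already recorded as Theorem~\ref{dis_str}, and everything else is bookkeeping. If one prefers a layer-based argument, the same conclusion can be read off from Corollary~\ref{dom_str} applied within each $G$-layer together with the observation that $S_G$ induces an \textit{EDS} on every layer, but the direct product computation above is the cleanest route.
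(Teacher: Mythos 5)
Your argument is correct: the closed-neighborhood factorization $N_{G\boxtimes H}[(a,b)] = N_G[a]\times N_H[b]$ holds, and the product $S_G\times S_H$ of efficient dominating sets is therefore an efficient dominating set of $G\boxtimes H$. The paper states this result only as a citation to \cite{abay2009} without reproducing a proof, and your construction is exactly the standard one used there, so there is nothing to add.
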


That is, in theorem \ref{ed_str}, it was proved that if $mcr(G)=1$ and $mcr(H)=1$, then $mcr(G\boxtimes H)=1$. That is the equality of theorem \ref{mcr_str} holds in this case. Next, we consider $C_m \boxtimes P_n$.
\begin{theorem}\label{cmpnbroad}
    $mcr(C_m \boxtimes P_n) = mcr(C_m)$ 
\end{theorem}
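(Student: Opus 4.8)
The plan is to prove the two inequalities $\operatorname{mcr}(C_m \boxtimes P_n) \geq \operatorname{mcr}(C_m)$ and $\operatorname{mcr}(C_m \boxtimes P_n) \leq \operatorname{mcr}(C_m)$ separately. The first inequality is immediate from Theorem \ref{mcr_str}, since $\operatorname{mcr}(P_n) = 1$ (as $P_n$ is $1$-efficiently dominatable) gives $\operatorname{mcr}(C_m \boxtimes P_n) \geq \max\{\operatorname{mcr}(C_m), \operatorname{mcr}(P_n)\} = \operatorname{mcr}(C_m)$.

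For the reverse inequality, I would take an optimal $k$-ELDB of $C_m$ with $k = \operatorname{mcr}(C_m)$ and lift it to $C_m \boxtimes P_n$. First I would understand precisely what a single broadcasting vertex of cost $d$ dominates in the strong product: by Theorem \ref{dis_str}, the distance is the max of the coordinatewise distances, so a vertex $(u_i, v_j)$ with $f(u_i,v_j) = d$ dominates exactly the set of $(u_p, v_q)$ with $d_{C_m}(u_i, u_p) \leq d$ and $d_{P_n}(v_j, v_q) \leq d$; this is a ``product ball.'' The key combinatorial idea is to handle the $P_n$-coordinate by slicing $P_n$ into intervals of appropriate lengths. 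If $v_{j}$ is a path vertex whose ball of radius $d$ in $P_n$ covers a maximal interval, then placing broadcasting vertices at the right $C_m$-positions within the layer $G^{(v_j)}$ (copying the $C_m$-broadcast, scaled if needed) covers a whole ``band'' of $P_n$-layers efficiently, because within such a band the $C_m$-coordinate behavior is exactly that of the original $C_m$-broadcast by Corollary \ref{dom_str}. I would then argue that one can tile $P_n$ by such bands: choosing centers spaced so that the radius-$k$ balls in $P_n$ partition $V(P_n)$ (possibly using a few balls of smaller radius at the ends, exactly as in the path/cycle arguments of Theorems for $P_n$ and $C_n$), and in each band replicate the optimal $\operatorname{mcr}(C_m)$-ELDB of $C_m$. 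Since the $C_m$-broadcast already achieves efficiency (every $C_m$-vertex dominated exactly once) and the bands partition the $P_n$-layers, the resulting broadcast on $C_m \boxtimes P_n$ is a $k$-ELDB, giving $\operatorname{mcr}(C_m \boxtimes P_n) \leq k = \operatorname{mcr}(C_m)$.

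The main obstacle I anticipate is the efficiency (exactly-once) bookkeeping at the interface between the reuse of the $C_m$-coordinate and the slicing of the $P_n$-coordinate: I must ensure that when a vertex $(u_i, v_j)$ of cost $d$ dominates a $P_n$-band of width $2d+1$, the $C_m$-broadcast used in that band genuinely dominates each $C_m$-layer exactly once and the bands do not overlap. A subtlety is that the $C_m$-broadcast at cost $k = \operatorname{mcr}(C_m)$ may itself use several distinct cost values (for instance cost $1$ and cost $2$ when $\operatorname{mcr}(C_m) = 2$), and a cost-$1$ broadcasting vertex in $C_m$ lifts to a cost-$1$ vertex in the product that only covers a width-$3$ band of $P_n$-layers, not width $2k+1$; so the band decomposition of $P_n$ must be compatible with the finest cost appearing, or else one must uniformly use cost-$k$ balls in the $P_n$-direction while keeping the $C_m$-pattern fixed across each band. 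I would resolve this by choosing, for the $P_n$-coordinate, a fixed radius equal to $k$ (consistent with the $P_n$ covering-by-intervals of Theorem \ref{pmh}-style arguments) and verifying that assigning cost $\max\{d_{C_m\text{-cost}}, \text{needed } P_n \text{ radius}\}$ — which is at most $k$ — to each chosen vertex still yields exactly-once domination; the remaining verification is the routine check that product balls of these radii tile $V(C_m \boxtimes P_n)$, which follows from the one-dimensional tilings already established for $C_m$ and $P_n$.
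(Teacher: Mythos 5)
Your lower bound is exactly the paper's: invoke Theorem~\ref{mcr_str} with $mcr(P_n)=1$. The problem is in the upper bound. You correctly isolate the crux --- an optimal $k$-ELDB of $C_m$ generally mixes costs (e.g.\ cost~$1$ and cost~$2$ arcs when $mcr(C_m)=2$), and in the strong product a broadcasting vertex of cost $d$ covers the \emph{product} ball $B_d(u)\times B_d(v)$, so you cannot choose the $C_m$-radius and the $P_n$-radius independently. But your proposed resolution, assigning each chosen vertex the cost $\max\{d_{C_m\text{-cost}},\ \text{needed }P_n\text{ radius}\}$ with a fixed $P_n$-radius $k$, forces every broadcasting vertex up to cost $k$ and thereby destroys the efficiency of the $C_m$-pattern you are trying to replicate. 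Concretely, for $m=8$ we have $mcr(C_8)=2$ and the optimal broadcast on $C_8$ uses one cost-$2$ vertex (covering $5$ vertices) and one cost-$1$ vertex (covering $3$); promoting the cost-$1$ vertex to cost $2$ makes the two radius-$2$ balls cover $5+5=10>8$ vertices of $C_8$, so some layer vertices are dominated twice. The same failure occurs for every $m\not\equiv 0\pmod 3$ with $m\notin\{4,7\}$. Your alternative (slicing $P_n$ at the finest cost) fails symmetrically: a cost-$2$ vertex then spills over a width-$3$ band into its neighbours.

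The fix --- and this is what the paper actually does --- is to let the $P_n$-slicing vary with the cost rather than be uniform. The optimal $k$-ELDB of $C_m$ partitions $V(C_m)$ into arcs, each arc being the radius-$d$ ball of its broadcasting vertex $u_c$ for some $d\le k$ (arcs of length $3$ and $5$, or a single arc for $m\in\{4,7\}$). For each such arc, place cost-$d$ vertices at $(u_c,v)$ for $v$ ranging over a perfect $d$-code of $P_n$ (which exists for all $d\le \operatorname{rad}(P_n)$). The product balls $B_d(u_c)\times B_d(v)$ then tile $\text{arc}\times V(P_n)$ exactly once, different arcs receive disjoint tilings with no leakage (each ball's $C_m$-projection is exactly its arc), and the maximum cost used is $k$. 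So the overall architecture of your argument is sound, but the specific mechanism for reconciling mixed costs is the step that fails and must be replaced by this per-arc construction.
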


\begin{proof} 

Let $ mcr(C_m) = k $. We prove this by constructing an efficient $ k $-limited dominating broadcast of $ C_m \boxtimes P_n $. 

Note that $ P_n $ admits a perfect $ d $-code for all $ d \leq \operatorname{rad}(P_n) $. 

\noindent \textbf{Case(i):}  $ m \equiv 0 \pmod{3} $\\
The theorem follows directly from Theorems \ref{Cm} and \ref{ed_str}.  

\noindent \textbf{Case(ii):}$ m = 4 $\\
Assign a perfect  $ 2 $-code of $ P_n $ to one of the $ P_n $ factors. By Corollary \ref{dom_str}, every vertex in $ C_4 \boxtimes P_n $ is dominated. Thus, $ mcr(C_4 \boxtimes P_n) \leq 2 $.  

\noindent \textbf{Case(iii):} $ m = 7 $\\
Assign a perfect $ 3 $-code of $ P_n $ to one of the $ P_n $ factors. Again, by Corollary \ref{dom_str}, every vertex in $ C_7 \boxtimes P_n $ is dominated. Hence, $ mcr(C_7 \boxtimes P_n) \leq 3 $.  

\noindent \textbf{Case(iv):} For $ m $ values other than those mentioned above\\
Assign a perfect $ 2 $-code of $ P_n $ to one of the $ P_n $ factors, say $ P_n^{(v_3)} $. This assignment dominates all vertices in $ C_m^{(v_j)} $ for $ 1 \leq j \leq 5 $. The induced subgraph $ H $ of undominated vertices forms $ P_{m-5} \boxtimes P_n $. Assign weights of 2 and 1 to the vertices of $ H $ such that each vertex with weight 2 dominates 5 vertices, and each vertex with weight 1 dominates 3 vertices, within their respective $ P_{m-5} $ factors of $ H $. Consequently, $ mcr(C_m \boxtimes P_n) \leq mcr(C_m) $. By Theorem \ref{mcr_str}, it follows that $ mcr(C_m \boxtimes P_n) = mcr(C_m) $.
\end{proof}

\begin{coro}
    The broadcast defined in the proof of theorem \ref{cmpnbroad} is $\gamma_{ebk}-$broadcast of $C_m \boxtimes P_n$
\end{coro}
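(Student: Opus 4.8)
The plan is to verify that the broadcast $f$ exhibited in the proof of Theorem \ref{cmpnbroad} is not merely \emph{some} $k$-ELDB of $C_m \boxtimes P_n$ but one of minimum cost, i.e. that $\sigma(f) = \gamma_{ebk}(C_m \boxtimes P_n)$ where $k = mcr(C_m)$. First I would compute $\sigma(f)$ explicitly in each of the four cases of that proof. In Case (i) ($m \equiv 0 \pmod 3$), the broadcast is the efficient $1$-limited broadcast obtained by multiplying a $\gamma_{eb1}$-broadcast of $C_m$ with an efficient dominating set of $P_n$ via Theorem \ref{ed_str}; its cost is the number of broadcasting vertices, which is $\lceil m/3 \rceil \cdot \lceil n/3 \rceil$. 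In the remaining cases $f$ uses only costs $1$ and $2$ placed along one $P_n$-layer (for the $P_n$-part) and then along the $P_{m-5}$ (or $C_m$) direction, and one tallies $\sigma(f)$ as (cost of the perfect $2$- or $3$-code on $P_n$) times (number of $P_m$-positions used), summed appropriately.

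Next I would establish the matching lower bound. The key structural fact, inherited from Proposition \ref{prop} and Theorem \ref{mcr_str}, is that when $mcr(H) \neq 1$ no broadcasting vertex of a $k$-ELDB can receive cost $1$ unless the corresponding layer has radius $1$; more importantly, by Corollary \ref{dom_str} a broadcasting vertex $(u_i,v_j)$ of cost $d$ dominates a ``block'' of vertices that is a product of a ball of radius $d$ in $C_m$ (around $u_i$) with a ball of radius $d$ in $P_n$ (around $v_j$), so the number of vertices it covers is at most (size of a $d$-ball in $C_m$) $\times$ (size of a $d$-ball in $P_n$). Since a $k$-ELDB partitions $V(C_m \boxtimes P_n)$ into such blocks, a counting/charging argument on each $C_m^{(v_j)}$-layer — exactly the projection argument used in Theorem \ref{mcr_str} — forces the total number of broadcasting vertices, and hence the total cost, to be at least that of $f$. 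Concretely, projecting any $k$-ELDB onto a single $C_m$-layer yields a $k$-ELDB of $C_m$, whose cost is at least $\gamma_{ebk}(C_m) = \lceil m/3\rceil$ by Theorem \ref{Cm}; combining this with the analogous projection onto a $P_n$-layer and the product structure of the blocks gives $\sigma(f) \geq \gamma_{ebk}(C_m)\cdot\lceil n/(2d{+}1)\rceil$ for the appropriate $d$, matching the computed value.

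The main obstacle I anticipate is the lower bound in the mixed-cost cases (Case (iv), and $m = 7$): there the optimal broadcast of $C_m$ itself already mixes costs $1$ and $2$ (or uses a cost $3$), and in the product a clever broadcast might try to trade a few high-cost vertices against many low-cost ones in a way that is not a ``product'' of one-dimensional codes. Ruling this out requires showing that any $k$-ELDB of $C_m \boxtimes P_n$ can be ``straightened'' layer-by-layer without increasing cost — i.e. that the partition into product-blocks forced by efficiency already decomposes the cost as a sum over $P_n$-positions of a $k$-ELDB-cost of $C_m$. I would carry this out by fixing a $P_n$-layer $P_n^{(u_i)}$, observing that the broadcasting vertices whose blocks meet it induce, after projection to $C_m$, a $2$-packing-like structure, and using the exchange argument to assume each block is ``aligned'' with the $P_n$ coordinate (its $P_n$-extent is a full $d$-ball), at which point $\sigma(f)$ factors and each factor is bounded below by $\gamma_{ebk}(C_m)$ and $\lceil n/(2d{+}1)\rceil$ respectively. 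Finally I would assemble the four cases into the statement, noting that in Case (i) the bound is just $F(C_m\boxtimes P_n)=|V|$ together with efficient domination optimality, which is classical.
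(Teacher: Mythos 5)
Your proposal correctly identifies the right geometric picture --- by Theorem \ref{dis_str} a broadcasting vertex of cost $d$ covers the product of a $d$-ball in $C_m$ with a $d$-ball in $P_n$, so an efficient broadcast is a perfect tiling of $C_m\boxtimes P_n$ by such blocks --- but the lower bound, which is the entire content of the corollary (the theorem already gives the upper bound), is not actually established, and the specific inequality you propose for it does not hold up. The bound $\sigma(f)\geq \gamma_{ebk}(C_m)\cdot\lceil n/(2d+1)\rceil$ presupposes a single radius $d$ and a cost that factors as (cost along $C_m$) times (number of $P_n$-positions); when costs $1$ and $2$ are mixed, as in Case (iv), even a product-type broadcast has cost $\sum_{u\in V_g^{+}} g(u)\,\lceil n/(2g(u)+1)\rceil$, which does not factor in this way. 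More seriously, the premise that an optimal broadcast of the product should project to a \emph{minimum-cost} broadcast of $C_m$ is wrong in spirit: per unit of cost a radius-$d$ block covers $2d+1$ vertices of $C_m$ but roughly $(2d+1)^2$ vertices of the product, so the cost-effectiveness ordering reverses --- in $C_m$ alone a cost-$1$ vertex is the most economical ($3$ vertices per unit of cost versus $5/2$ for cost $2$), whereas in $C_m\boxtimes P_n$ a cost-$2$ block is ($25/2$ per unit versus $9$). A competing $2$-ELDB will therefore try to use as \emph{many} $5\times 5$ blocks as possible, exactly the opposite of a $\gamma_{eb2}$-broadcast of $C_m$, and the projection argument of Theorem \ref{mcr_str} only yields $\gamma_{ebk}(C_m\boxtimes P_n)\geq\gamma_{ebk}(C_m)$, a bound that does not even grow with $n$. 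The ``straightening'' exchange you defer to in your last paragraph is precisely the missing proof; you give no construction of the exchange, no argument that it preserves efficiency, and no argument that it does not increase cost.

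Two smaller gaps. In Case (i) you dismiss the lower bound as ``classical efficient domination optimality,'' but a graph can admit efficient dominating sets of different cardinalities, so the minimality of the product EDS of size $(m/3)\lceil n/3\rceil$ among all $1$-ELDBs of $C_m\boxtimes P_n$ still requires an argument; the crude degree bound $|S|\geq mn/9$ falls strictly short of $(m/3)\lceil n/3\rceil$ whenever $3\nmid n$, so boundary bookkeeping is genuinely needed. Finally, your cost computations in Cases (ii)--(iv) are only sketched (``summed appropriately''), yet the construction in Case (iv) of Theorem \ref{cmpnbroad} is itself described loosely in the paper, so pinning down the exact value of $\sigma(f)$ --- which any optimality proof must match --- is a prerequisite you have not completed.
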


\begin{coro}
   If a graph $ H $ has perfect $1$-code and a perfect $2$-code, then $mcr(C_m \boxtimes H) = mcr(C_m)$.
\end{coro}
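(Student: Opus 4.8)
The plan is to reduce the statement about $C_m \boxtimes H$ to the already-established result $mcr(C_m \boxtimes P_n) = mcr(C_m)$ (Theorem \ref{cmpnbroad}), using the fact that the proof of that theorem only ever used the path factor through its possession of a perfect $2$-code (and, in the cases $m \equiv 0 \pmod 3$ and $m = 7$, a perfect $1$-code or $3$-code). First I would invoke Theorem \ref{mcr_str} to get the lower bound $mcr(C_m \boxtimes H) \ge mcr(C_m)$ immediately, so that all the work lies in producing a $k$-ELDB of $C_m \boxtimes H$ with $k = mcr(C_m)$ realizing the upper bound.

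For the upper bound I would mirror the four cases in the proof of Theorem \ref{cmpnbroad}. Fix a perfect $1$-code $D_1$ and a perfect $2$-code $D_2$ of $H$. In the case $m \equiv 0 \pmod 3$, one places a perfect $1$-code of $C_m$ on one $C_m$-layer and, by Corollary \ref{dom_str}, spreading $D_1$ of $H$ along the $H$-layers keeps every vertex dominated exactly once; more simply, since $mcr(C_m) = 1$ here and $mcr(H) = 1$ (a graph with a perfect $1$-code is efficiently dominatable), Theorem \ref{ed_str} already gives $mcr(C_m \boxtimes H) = 1$. For $m = 4$ and the generic case ($m \notin \{3k, 4, 7\}$, handled via $mcr(C_m) \in \{2,3\}$), I would assign the perfect $2$-code $D_2$ of $H$ to a chosen $H$-layer indexed by the appropriate vertex of $C_m$; by Corollary \ref{dom_str} a vertex $(u_i, v_j)$ with $f = 2$ dominates exactly those $(u_p, v_l)$ with $d_{C_m}(u_i,u_p) \le 2$ and $d_H(v_j, v_l) \le 2$, which is precisely the product behavior the $P_n$ proof exploited, so the same placement pattern on the $C_m$ coordinate that efficiently covers $C_m$ with radius-$2$ balls lifts verbatim. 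In the generic case, after the first perfect $2$-code the undominated vertices induce $P_{m-5} \boxtimes H$, and since $P_{m-5}$ is $[2]$-$\{1\}$ efficiently dominatable, one finishes with cost-$2$ (and cost-$1$ where $mcr(C_m)=3$ forces it) balls exactly as before. Here the perfect $1$-code $D_1$ of $H$ is what makes a cost-$1$ broadcasting vertex on a $C_m$-layer dominate a clean ball: a vertex with $f=1$ in $C_m \boxtimes H$ dominates $\{(u_p,v_l) : d_{C_m}(u_i,u_p)\le 1,\ d_H(v_j,v_l)\le 1\}$, so replicating $D_1$ along the $H$-direction again keeps the count exactly one. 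The case $m = 7$ is the only place a perfect $3$-code would be needed, and since $C_7$ is the exceptional value with $mcr(C_7) = 3$, one would either need to assume $H$ also has a perfect $3$-code or simply note that $m=7$ is excluded from the hypothesis's reach — but in fact a closer look shows $C_7 \boxtimes H$ can be handled by combining one perfect $2$-code layer with the $P_2 \boxtimes H$ leftover, so only perfect $1$- and $2$-codes are truly required.

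The main obstacle I anticipate is verifying that when a perfect $2$-code (or $1$-code) of $H$ is laid down on the $H$-layers while the $C_m$-coordinate runs through its own efficient radius-$2$ (resp. radius-$1$) covering, the resulting broadcast is genuinely \emph{efficient} — that no vertex of $C_m \boxtimes H$ is heard twice. This is where Corollary \ref{dom_str} does the heavy lifting: the ball around a broadcasting vertex factors as a product of a $C_m$-ball and an $H$-ball, so two broadcasting vertices $(u_i,v_j)$ and $(u_{i'},v_{j'})$ cover a common vertex only if their $C_m$-balls overlap and their $H$-balls overlap; the $C_m$-side is disjoint because we chose an efficient covering of $C_m$, and the $H$-side is disjoint because $D_1$ (resp. $D_2$) is a perfect code. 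I would make this overlap-factorization argument explicit once and then apply it uniformly across the cases, which keeps the proof short and avoids re-deriving the combinatorics already present in Theorem \ref{cmpnbroad}.
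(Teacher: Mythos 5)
Your overall strategy is the right one and matches what the paper intends: the corollary is meant to follow by rereading the proof of Theorem \ref{cmpnpbroad} --- correction, Theorem \ref{cmpnbroad} --- and observing that the path factor enters only through its perfect $d$-codes. The lower bound is Theorem \ref{mcr_str}, and the upper bound is obtained by laying a perfect $1$- or $2$-code of $H$ (with cost $1$ or $2$) on the $H$-layers indexed by the centres of a partition of $C_m$ into arcs of lengths $3$ and $5$; your overlap-factorization argument via Corollary \ref{dom_str} is exactly the right justification, and it handles every case with $mcr(C_m)\in\{1,2\}$ correctly.

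The gap is your treatment of $m=7$. You correctly notice that this is the one place where the proof of Theorem \ref{cmpnbroad} uses a perfect $3$-code, but your proposed repair --- one perfect $2$-code layer plus a ``$P_2\boxtimes H$ leftover'' --- fails. After the perfect $2$-code layer at $u_3$ covers $\{u_1,\dots,u_5\}\times V(H)$ exactly once, every vertex of $\{u_6,u_7\}\times V(H)$ must still be dominated; but any broadcasting vertex $(u,v)$ of cost $c\ge 1$ whose box $B_{C_7}(u,c)\times B_H(v,c)$ meets $\{u_6,u_7\}\times V(H)$ has $B_{C_7}(u,c)$ containing at least one of $u_1,\dots,u_5$ (a radius-$1$ ball at $u_6$ already contains $u_5$, one at $u_7$ contains $u_1$, and a ball centred in $\{u_1,\dots,u_5\}$ contains its own centre), so some already-dominated vertex hears two broadcasts. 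In fact the obstruction is intrinsic: restricting any efficient dominating broadcast of $C_7\boxtimes H$ to a layer $C_7^{(v_l)}$ yields a partition of $V(C_7)$ into balls of radius at least $1$, and since $7$ cannot be written as a sum of terms from $\{3,5\}$, the only such partition is a single radius-$3$ ball; hence every broadcasting vertex has cost $3$ and their $H$-coordinates must form a perfect $3$-code of $H$. So $mcr(C_7\boxtimes H)=3=mcr(C_7)$ holds only when $H$ has a perfect $3$-code, and it fails, for example, for $H=C_{15}$, which has perfect $1$- and $2$-codes but no perfect $3$-code. The statement should therefore either add the hypothesis that $H$ has a perfect $d$-code for every $d\le mcr(C_m)$ --- which is exactly the property of $P_n$ invoked in Theorem \ref{cmpnbroad} --- or exclude $m=7$; your write-up should not assert that perfect $1$- and $2$-codes suffice there.
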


\begin{theorem}
 If $\operatorname{rad}(G)=1$, then $mcr(G \boxtimes H) = mcr(H)$, \\ $\gamma_{ebk}(G \boxtimes H )= \gamma_{ebk}(H)$, where $k=mcr(H)$
\end{theorem}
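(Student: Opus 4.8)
The plan is to establish the equality $mcr(G \boxtimes H) = mcr(H)$ by two inequalities, and then to obtain the cost statement by exhibiting a mutual cost-preserving correspondence between $k$-ELDBs of $G \boxtimes H$ and $k$-ELDBs of $H$. The easy direction is $mcr(G \boxtimes H) \geq mcr(H)$: this is immediate from Theorem~\ref{mcr_str}, since $\max\{mcr(G), mcr(H)\} \geq mcr(H)$. (Note $mcr(G) = 1$ here because $\operatorname{rad}(G) = 1$ forces $G$ to be efficiently dominatable — a single universal vertex with cost $1$ is an EDS — so in fact $\max\{mcr(G), mcr(H)\} = mcr(H)$ already, which is a useful consistency check.) For the reverse inequality $mcr(G \boxtimes H) \leq mcr(H)$, let $k = mcr(H)$ and let $g$ be a $k$-ELDB of $H$. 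First I would observe, using Theorem~\ref{dis_str}, that since $\operatorname{rad}(G) = 1$ there is a vertex $u_1 \in V(G)$ adjacent to every other vertex, so $d_G(u_1, u') \leq 1$ for all $u'$; hence in $G \boxtimes H$ a vertex $(u_1, v)$ carrying cost $d \geq 1$ dominates exactly $\{(u', v') : d_H(v, v') \leq d\}$, i.e. it ``fans out'' over the whole of $G$ in the first coordinate while behaving like $v$ with cost $d$ in $H$.

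The construction is then to define $f$ on $G \boxtimes H$ by $f(u_1, v_j) = g(v_j)$ for all $j$ and $f(u_i, v_j) = 0$ for $i \neq 1$. I would verify that $f$ is a $k$-ELDB: given any $(u_p, v_q) \in V(G \boxtimes H)$, since $g$ dominates $v_q$ exactly once there is a unique broadcasting vertex $v_j$ of $g$ with $d_H(v_j, v_q) \leq g(v_j)$; then $d_{G \boxtimes H}((u_1, v_j), (u_p, v_q)) = \max\{d_G(u_1, u_p), d_H(v_j, v_q)\} = \max\{1 \text{ or } 0,\ d_H(v_j, v_q)\} \leq g(v_j) = f(u_1,v_j)$ provided $g(v_j) \geq 1$, which holds since $v_j$ is a broadcasting vertex. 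Uniqueness follows because any broadcasting vertex of $f$ lies in the $u_1$-row, and if $(u_1, v_j)$ and $(u_1, v_{j'})$ both dominated $(u_p, v_q)$ then by the distance formula both $v_j$ and $v_{j'}$ would dominate $v_q$ under $g$, contradicting efficiency of $g$. Hence $f$ is a $k$-ELDB of $G \boxtimes H$ with $k = mcr(H)$, giving $mcr(G \boxtimes H) \leq mcr(H)$, and combined with Theorem~\ref{mcr_str} the equality follows.

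For the cost equality $\gamma_{ebk}(G \boxtimes H) = \gamma_{ebk}(H)$ with $k = mcr(H)$, note first that $\omega(f) = \sum_j g(v_j) = \omega(g)$ in the construction above, so taking $g$ to be a $\gamma_{ebk}$-broadcast of $H$ yields $\gamma_{ebk}(G \boxtimes H) \leq \gamma_{ebk}(H)$. For the other direction, given a $\gamma_{ebk}$-broadcast $f$ of $G \boxtimes H$, I would push it down to $H$ the way the proof of Theorem~\ref{mcr_str} pushes broadcasts onto a layer: for each broadcasting vertex $(u_i, v_j)$ of $f$, the set it dominates, when intersected with the layer $G^{(v_1)}$ — or rather, projected appropriately onto $H$ — is governed by Corollary~\ref{dom_str}, and one collects the costs onto a single $H$-copy. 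The resulting broadcast $g$ on $H$ is a $k$-ELDB with $\omega(g) \leq \omega(f)$, so $\gamma_{ebk}(H) \leq \gamma_{ebk}(G \boxtimes H)$.

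\textbf{Main obstacle.} The routine parts are the distance computations. The one step needing genuine care is the projection from $G \boxtimes H$ down to $H$ in the cost argument: I must check that after collapsing all broadcasting vertices onto one $H$-layer (or onto $H$ itself), the images are still pairwise ``efficient'' — that two distinct broadcasting vertices of $f$ cannot project to overlapping $f$-neighborhoods in $H$ — and that no vertex of $H$ is left undominated or doubly dominated. This is where the efficiency hypothesis on $f$, together with Corollary~\ref{dom_str} and the fact that $\operatorname{rad}(G)=1$ makes the $G$-coordinate ``transparent'' to domination, must be combined carefully; it is essentially the only place where something could go wrong if $G$ had radius larger than $1$.
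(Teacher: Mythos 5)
Your proposal is correct and follows essentially the same route as the paper: place the $k$-ELDB of $H$ on the $H$-layer of a central vertex of $G$ and use the distance formula $\max\{d_G,d_H\}$ from Theorem~\ref{dis_str} to see that the $G$-coordinate is transparent to domination. You are in fact more thorough than the paper, whose proof only records the construction direction and silently omits the cost lower bound $\gamma_{ebk}(G \boxtimes H) \geq \gamma_{ebk}(H)$; the projection step you flag as the main obstacle does go through, since for any $v_q \in V(H)$ the unique $f$-dominator of the vertex $(u_c, v_q)$ (with $u_c$ central in $G$) yields both the existence and the uniqueness of a $g$-dominator of $v_q$ after projecting onto $H$.
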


\begin{proof}

Consider the $k$-$ELDB$ of $H$ with respect to the factor $H^{(u_i)}$, where $u_i$ represents the central vertex of $G$. If a broadcasting vertex $v_j$ in $H$ dominates a vertex $v_p$ in $H$, then the vertex $(u_i, v_j)$, inheriting the same cost as $v_j$, dominates all vertices in $G^{(v_p)}$. Hence, the theorem holds.   
\end{proof}
\begin{coro}
 $mcr(K_{1,n} \boxtimes H )= mcr(K_{m} \boxtimes H) = mcr(H)$, $\gamma_{ebk}(K_{1,n} \boxtimes H )= \gamma_{ebk}(K_m \boxtimes H)=\gamma_{ebk}(H)$, where $k=mcr(H)$.
\end{coro}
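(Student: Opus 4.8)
The plan is to read this off as an immediate application of the theorem stated just before it, namely that $\operatorname{rad}(G)=1$ implies $mcr(G\boxtimes H)=mcr(H)$ and $\gamma_{ebk}(G\boxtimes H)=\gamma_{ebk}(H)$ for $k=mcr(H)$. So the only thing that actually needs checking is that the two factor graphs $K_{1,n}$ and $K_m$ satisfy the hypothesis, i.e.\ that each has radius $1$.

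First I would recall that a graph has radius $1$ exactly when it has a universal vertex (one adjacent to every other vertex). The centre of the star $K_{1,n}$ is adjacent to all $n$ leaves, so it has eccentricity $1$ and hence $\operatorname{rad}(K_{1,n})=1$; in $K_m$ every vertex is universal, so $\operatorname{rad}(K_m)=1$ as well. (The degenerate graphs $K_1$ and $K_2$ carry no content here and can be dispatched directly.) Applying the preceding theorem once with $G=K_{1,n}$ and once with $G=K_m$ then gives $mcr(K_{1,n}\boxtimes H)=mcr(H)$, $mcr(K_m\boxtimes H)=mcr(H)$, and likewise $\gamma_{ebk}(K_{1,n}\boxtimes H)=\gamma_{ebk}(H)=\gamma_{ebk}(K_m\boxtimes H)$ for $k=mcr(H)$; chaining these equalities yields every identity in the statement.

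If a self-contained argument is preferred over the citation, I would unwind the proof of that theorem. Fix a universal vertex $u_i$ of $G$ and transplant a $\gamma_{ebk}$-broadcast $g$ of $H$ (with $k=mcr(H)$) onto the layer $H^{(u_i)}$, setting $f(u_i,v)=g(v)$ for all $v\in V(H)$ and $f\equiv 0$ on every other layer. By Theorem \ref{dis_str}, $d((u_i,v),(u',v'))=\max\{d(u_i,u'),d(v,v')\}$, so whenever $g(v_j)\ge 1$ the vertex $(u_i,v_j)$ dominates precisely those $(u',v_p)$ with $d(v_j,v_p)\le g(v_j)$, i.e.\ the entire layer $G^{(v_p)}$ for each $v_p$ that $v_j$ dominates in $H$ (Corollary \ref{dom_str}). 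Since $g$ dominates each vertex of $H$ exactly once, $f$ dominates each vertex of $G\boxtimes H$ exactly once, so $f$ is a $k$-ELDB of $G\boxtimes H$ of the same cost, giving $mcr(G\boxtimes H)\le mcr(H)$ and $\gamma_{ebk}(G\boxtimes H)\le\gamma_{ebk}(H)$; the reverse inequality $mcr(G\boxtimes H)\ge mcr(H)$ is Theorem \ref{mcr_str}, and optimality of $g$ forces equality of costs. There is essentially no obstacle: the only mild care needed is to confirm the radius-$1$ hypothesis for $K_{1,n}$ and $K_m$ and to handle the trivial small graphs.
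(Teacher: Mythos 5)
Your proposal is correct and matches the paper's intent exactly: the corollary is meant as an immediate application of the preceding theorem once one observes that $K_{1,n}$ and $K_m$ each have radius $1$, which is precisely what you do. The additional self-contained unwinding you sketch is just the paper's own proof of that theorem restated, so no new route is involved.
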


Thus far, given a graph $ G $, we have determined $ mcr(G) $. In the next section, we address the converse problem.

\section{Computational hardness of efficient $k$-limited broadcast domination problem}
Given any integer $k$, does there exist a graph $G$ such that $mcr(G)=k$?
We answer the question of the existence of such graphs by construction.

\subsection{Construction of a class of trees $T_k$}
A vertex adjacent to a leaf vertex is said to be a support vertex.

\begin{obs}\label{obs_oTk}
If $f$ is an efficient dominating broadcast on a graph $G$, then there exists no support vertex at a distance $f(v)$ from a broadcasting vertex $v$.
\end{obs}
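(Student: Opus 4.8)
The plan is to argue by contradiction. Suppose $f$ is an efficient dominating broadcast on $G$, let $v$ be a broadcasting vertex, and suppose for contradiction that there is a support vertex $w$ with $d(v,w)=f(v)$; note that $v\neq w$, since $f(v)\geq 1$. Fix a leaf $\ell$ adjacent to $w$. The only structural fact I would rely on is that a leaf has a unique neighbour, so any walk ending at $\ell$ has $w$ as its penultimate vertex; hence $d(x,\ell)=d(x,w)+1$ whenever $x\neq\ell$ and $x$ lies in the component of $w$ (which contains $v$, since $d(v,w)=f(v)$ is finite).

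Next I would locate $\ell$ relative to $v$. Assuming $\ell\neq v$, the structural fact gives $d(v,\ell)=d(v,w)+1=f(v)+1>f(v)$, so $v$ does not $f$-dominate $\ell$. Since $f$ is a dominating broadcast, $\ell$ is $f$-dominated by some broadcasting vertex $v'$, and necessarily $v'\neq v$. I then claim $v'$ also $f$-dominates $w$: if $v'=\ell$ this holds because $d(\ell,w)=1\leq f(v')$, and if $v'\neq\ell$ then $d(v',w)=d(v',\ell)-1\leq f(v')-1<f(v')$. But $d(v,w)=f(v)$ already says that $v$ $f$-dominates $w$, so $w$ is $f$-dominated by the two distinct broadcasting vertices $v$ and $v'$, contradicting the hypothesis that every vertex is $f$-dominated by exactly one vertex. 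This contradiction would complete the proof.

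The step I expect to be the main obstacle is precisely the case set aside above, where the only leaf adjacent to $w$ is $v$ itself: then $v$ is a leaf whose unique neighbour is $w$, so $f(v)=d(v,w)=1$, and the identity $d(v,\ell)=d(v,w)+1$ becomes vacuous because $\ell=v$. This case cannot simply be argued away in general — for instance $P_4$ with $f$ equal to $1$ at its two end-vertices is an efficient dominating broadcast in which a support vertex sits at distance $f(v)=1$ from a broadcasting leaf $v$. Accordingly I would read the observation with the mild additional hypothesis that the broadcasting vertices under consideration are interior (non-leaf) vertices — equivalently, $f(v)\geq 2$ whenever $v$ is a broadcasting vertex adjacent to a leaf — which is the situation for the trees $T_k$ constructed in this section; under that hypothesis $\ell\neq v$ always holds and the argument above goes through unchanged.
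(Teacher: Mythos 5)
The paper states this as an Observation and offers no proof at all, so there is nothing to compare your argument against line by line; what you have written is, in effect, the missing proof. Your double-domination argument is correct: if $w$ is a support vertex with $d(v,w)=f(v)$ and $w$ has a leaf neighbour $\ell\neq v$, then $d(v,\ell)=f(v)+1$ forces a second broadcasting vertex $v'$ to reach $\ell$, and that $v'$ necessarily also reaches $w$, so $w$ hears two broadcasts --- contradicting $|H(w)|=1$. More importantly, the exceptional case you isolate is genuine and your $P_4$ counterexample is valid: with cost $1$ on both end-vertices, $P_4$ has an efficient dominating broadcast in which the support vertex $b$ sits at distance $f(a)=1$ from the broadcasting leaf $a$, so the observation as literally stated is false. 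Your amended reading (exclude the case where the only leaf adjacent to $w$ is the broadcasting vertex itself) is in fact the reading the paper implicitly intends: in the proof of Lemma~\ref{Tklemma} the authors invoke this very observation to conclude that the non-diametrical leaves of $T_k$ must themselves be assigned cost $1$, which is precisely the exceptional configuration your hypothesis carves out. So your proposal both supplies a proof the paper omits and corrects the statement to the form actually used later; the only caveat is that your fix should be phrased as ``the leaf in question is not $v$ itself'' rather than ``$v$ is a non-leaf vertex,'' since the latter would also rule out the legitimate applications where a leaf broadcasts with cost $1$ toward its own support vertex.
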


For $k\geq 2$, we give a recursive construction for a family of trees $T_k$ with $mcr(T_k)=k$. That is, there exists no efficient $(k-1)$-limited dominating broadcast on $T_k$.\\
	\begin{construction}{\label{Tk}}

\noindent Assigning step: Let $T_1 =K_2$ \\
Iterative step: Hook two copies of $P_3$, each at the diametrical endpoints of the graph $T_{i-1}$ to get $T_i$\\
Recursive step: Repeat Step 2 until $i=k$\\

\end{construction}
		\begin{figure}[!h]
			\centering

   \includegraphics[scale=0.77]{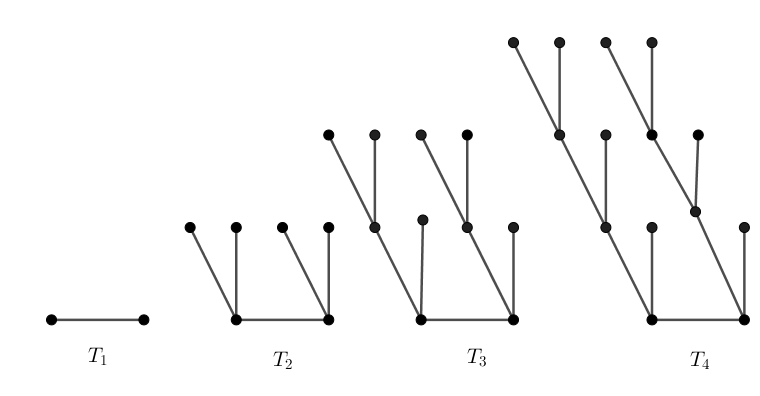}
			\caption{Graphs of $T_k$}
			\label{Tk}
		\end{figure}

\noindent We observe that $T_k$ is a bicentral tree.
\begin{lemma}\label{Tklemma}
    Graph $T_k$ admits an unique efficient $k$-limited dominating broadcast, where one of the central vertex receives the cost $k$. i.e.,  $mcr(T_k)=k$. 
\end{lemma}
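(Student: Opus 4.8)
The plan is to argue by induction on $k$, using Construction \ref{Tk}. For the base case $k=2$, I would directly analyze $T_2$ (obtained by hooking two copies of $P_3$ at the two endpoints of $T_1 = K_2$), checking by inspection that no $1$-ELDB exists (equivalently, $T_2$ has no efficient dominating set), while exhibiting the claimed $2$-ELDB in which one of the two central vertices is assigned cost $2$ and the other cost $0$. The key structural features to record at this stage are: $T_k$ is a bicentral tree with two adjacent central vertices, say $c_1 c_2$; the two "arms" attached in the last iterative step are paths of length governed by the recursion; and every leaf has a unique support vertex. Observation \ref{obs_oTk} will be the workhorse — it forbids a support vertex from sitting at distance exactly $f(v)$ from any broadcasting vertex $v$ — together with Observation \ref{obs_oTk}'s consequence that leaves themselves, not being able to hear anyone else efficiently in a tree of this shape, force broadcasting vertices to be pushed inward by a controlled amount.

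For the inductive step, suppose $T_{k-1}$ admits a unique $(k-1)$-ELDB with a central vertex receiving cost $k-1$, and $mcr(T_{k-1}) = k-1$. In $T_k$ we attach two copies of $P_3$ at the diametrical endpoints of $T_{k-1}$. First I would show $mcr(T_k) \le k$ by extending the $(k-1)$-ELDB of $T_{k-1}$: the central vertex's cost bumps from $k-1$ to $k$ so that its $f$-ball now reaches into the newly attached arms by one extra step, and I would verify that the two new $P_3$'s are then dominated exactly once — here I must be careful that the parity/length of the arms is exactly what makes the extra radius land correctly on the new leaves without overdomination, which is precisely what Construction \ref{Tk} is engineered to guarantee. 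Then, for the lower bound $mcr(T_k) \ge k$, I would suppose for contradiction that a $(k-1)$-ELDB $g$ of $T_k$ exists. Because $T_k$ contains $T_{k-1}$ as an induced subgraph sitting "inside" the two new arms, I would argue that the restriction of $g$ to the region far from the new leaves must (after the arms are accounted for) behave like a dominating broadcast on a copy of $T_{k-1}$ with budget $k-1$; but the new leaves at distance roughly $k$ from the center cannot be efficiently dominated under a $(k-1)$-limited broadcast without either overdominating an arm vertex, violating Observation \ref{obs_oTk} at a support vertex, or double-dominating the junction where the arm meets $T_{k-1}$. Combining these forces a contradiction with the inductive hypothesis $mcr(T_{k-1}) = k-1$. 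Uniqueness would follow the same way: any $k$-ELDB of $T_k$ must, by the leaf/support constraints on the two arms, place a broadcasting vertex of cost exactly $k$ at a central vertex (no other placement reaches the far leaves exactly once within budget $k$), and then its restriction away from the arms is forced to be the unique $(k-1)$-ELDB of $T_{k-1}$ by the inductive hypothesis.

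The main obstacle I anticipate is the bookkeeping in the lower-bound step: making rigorous the claim that a hypothetical $(k-1)$-ELDB on $T_k$ "restricts" to a $(k-1)$-limited efficient broadcast on $T_{k-1}$. A broadcasting vertex inside one arm could in principle reach across the center into $T_{k-1}$ and into the other arm, so the restriction is not literally a broadcast on $T_{k-1}$; I would need a careful case split on where the broadcasting vertices covering the new leaves lie (in the arm, at the junction, or at a central vertex) and show that in every case either an arm leaf is left undominated, some vertex is dominated twice, or Observation \ref{obs_oTk} is violated at one of the four new support vertices. Keeping track of distances along the two symmetric arms, and exploiting the bicentral symmetry to halve the casework, is where the real work lies; the rest is the kind of routine verification that Construction \ref{Tk} was designed to make go through.
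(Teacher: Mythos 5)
Your overall strategy (induction on $k$) is genuinely different from the paper's, which gives a direct, non-inductive argument: it exhibits $f(c)=k$ at a central vertex, then uses Observation \ref{obs_oTk} to argue that any alternative assignment is forced to give cost $1$ to the leaves other than the diametrical endpoints, which leaves the two end copies of $P_3$ impossible to dominate efficiently, and finally invokes $ecc(v)\ge k+1$ for every non-central vertex $v$ to conclude uniqueness. An induction could in principle be made to work, but as written your proposal has two genuine gaps.

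First, the step that carries all the content --- the lower bound $mcr(T_k)\ge k$ together with uniqueness --- is only announced, not performed. You correctly observe that a broadcasting vertex inside one arm can reach across the centre into $T_{k-1}$ and the other arm, so a hypothetical $(k-1)$-ELDB of $T_k$ does not literally restrict to a broadcast on $T_{k-1}$, and you then defer the resulting case analysis. That case analysis \emph{is} the lemma; without it the induction does not close. Second, and more consequentially, your structural reading of Construction \ref{Tk} appears to be wrong: you describe the attached pieces as ``paths of length governed by the recursion.'' Consistency with the rest of the paper (the reduction graph $G_1$ has $(4k-2)n$ vertices contributed by $n$ copies of $T_k$, so $|V(T_k)|=4k-2$ and each iteration adds only four vertices) and with the lemma itself forces each copy of $P_3$ to be hooked by identifying its \emph{centre} with a diametrical endpoint; that is, each step adds a pair of twin leaves sharing a common support vertex, and the radius grows by exactly one. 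Under the reading in which each $P_3$ is a pendant path attached by an endpoint, $T_2$ would be $P_8$, which is efficiently dominatable, and your base case (``$T_2$ has no $1$-ELDB'') would be false. The twin-leaf structure is precisely what makes Observation \ref{obs_oTk} bite (two leaves with a common support cannot both be dominated exactly once except from their support or further inward), so both your base case and your inductive bookkeeping need to be rebuilt on that structure before the ``routine verification'' you appeal to can go through.
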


\begin{proof}
    Let $ f $ be the assignment such that $ f(u) = k $ for $ u $ being one of the central vertices of the graph. Suppose there exists another assignment $ g $ that differs from $ f $. According to Observation \ref{obs_oTk},  the leaf vertices excluding the diametrical endpoints of $ T_k $ has to be assigned a cost of 1. This assignment ensures dominating all vertices, except for the two end copies of $ P_3 $. Since eccentricity $ ecc(v) \geq k+1 $ for all vertices $ v $ other than the central vertices, the assignment $ f $ is the unique efficient $ k $-limited dominating broadcast up to isomorphism.
\end{proof}

\subsection{NP-Completeness of efficient $k$-limited broadcast domination}

The decision version of $k$-$ELDB$ problem is stated as follows\\
\textbf{Instance:} \textit{A graph $G$ and an integer $k$}. \\
\textbf{Decision Problem:} Determine whether $G$ admits an efficient $k$- limited broadcast domination.\\
To prove that the $k$-$ELDB$ problem is NP-commplete, we establish its reduction from Exact 3-SAT problem as below.
The EXACT 3-SAT problem:\\
\textbf{Instance:} \textit{A CNF formula with $m$ clauses $C_1,C_2,\dots, C_m$ and $n$ literals $u_1,u_2,\dots,u_n$}.\\
\textbf{Decision Problem:} Determine whether there exists a truth assignment function $T:u_i\rightarrow \{0,1\}$ such that the CNF formula is satisfied with exactly one of the three literals in each clause being assigned the value true(1).\\

In the next theorem, we give a reduction from EXACT 3-SAT to $k$-efficient broadcast domination problem. 
\begin{theorem} \label{npc}
    Efficient $k$-limited broadcast domination problem is NP-complete.
\end{theorem}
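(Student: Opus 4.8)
The plan is to establish NP-completeness by first showing membership in NP and then giving a polynomial-time reduction from EXACT 3-SAT. Membership in NP is routine: given a function $f: V(G) \to \{0,1,\dots,k\}$, one can verify in polynomial time (by all-pairs shortest paths) that every vertex is $f$-dominated by exactly one broadcasting vertex, so $f$ serves as a polynomial-size certificate. The substance of the proof is the reduction, so I would devote most of the write-up to constructing, from an EXACT 3-SAT instance with clauses $C_1,\dots,C_m$ over literals $u_1,\dots,u_n$, a graph $G$ (and a value of $k$) such that $G$ admits a $k$-ELDB if and only if the formula has an exact-one-in-three satisfying assignment.

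The construction I would use is a gadget-based one. For each variable $u_i$ I would build a variable gadget whose $k$-ELDB's have exactly two "canonical" local configurations, to be read as the truth value $T(u_i) \in \{0,1\}$; the tree family $T_k$ of Construction \ref{Tk} and Lemma \ref{Tklemma} is the natural tool here, since it forces a specific broadcasting vertex to carry cost exactly $k$ and pins down the rest of the assignment uniquely up to isomorphism — this rigidity is what lets a gadget "remember" a bit. For each clause $C_j$ I would build a clause gadget attached (via paths of carefully chosen length, tuned against $k$) to the three variable gadgets of its literals, designed so that the clause vertex is dominated (and dominated exactly once, which is the key efficiency constraint) precisely when exactly one of the three incident literals is set to true. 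Paths of length comparable to $k$ are the connective tissue: because the broadcast is $k$-limited, a broadcasting vertex of cost $k$ reaches exactly distance $k$, so one can calibrate path lengths so that "one true literal" exactly covers the clause vertex while "zero true literals" leaves it uncovered and "two or three true literals" overdominates it, violating efficiency. I would then argue the two directions: a satisfying exact-3-SAT assignment yields a $k$-ELDB by placing the canonical gadget broadcasts, and conversely any $k$-ELDB of $G$ must, by the rigidity lemmas (Observation \ref{obs_oTk} and Lemma \ref{Tklemma}) together with the efficiency constraint at clause vertices, restrict on each variable gadget to one of the two canonical forms and satisfy each clause exactly once, giving an exact-one-in-three assignment.

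The main obstacle I anticipate is the simultaneous enforcement of the two constraints that make this problem harder than ordinary domination: the broadcast must be \emph{limited} to $k$ (so gadgets cannot be "saved" by a single high-power broadcast, which forces the local structure to do the work) and it must be \emph{efficient} (exactly-once domination), which is what encodes the "exactly one literal true" requirement rather than "at least one". Getting a single graph and a single parameter $k$ for which both the forward and backward implications go through — in particular ruling out unintended $k$-ELDB's that do not come from any truth assignment — is the delicate part, and I would expect to spend most of the proof verifying that no "cheating" broadcast exists, leaning on the uniqueness statement in Lemma \ref{Tklemma} and a careful distance analysis in the connector paths. Finally I would note that the size of $G$ is polynomial in $n+m$ and that $k$ can be taken to be a fixed function of the gadget design (or bounded by the diameter), so the reduction is polynomial, completing the proof.
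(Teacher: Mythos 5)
Your plan matches the paper's proof essentially exactly: a reduction from EXACT 3-SAT in which each variable is represented by a copy of $T_k$ (with its two central vertices playing the roles of $u_i$ and $\overline{u_i}$, and Lemma \ref{Tklemma} supplying the rigidity that forces one of them to carry cost $k$), each clause is a single vertex joined to its three literals by paths of length exactly $k$ (edges subdivided $k-1$ times), and the exactly-once domination of the clause vertex encodes the one-in-three condition. The details you leave open (the precise path length and the verification that no unintended broadcast exists) are resolved in the paper exactly as you anticipate, so this is the same argument.
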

\begin{proof}
    We present a reduction from the EXACT 3-SAT problem. Given an instance of the Exact 3-SAT problem, which consists of CNF formula with $m$ clauses, $C_1, C_2,...C_m$ and $n$ literals, $U=\{u_1,u_2,...,u_n\}$. We construct a graph of
    $(3k-2)m + (4k-2)n$ number of vertices and certain number of edges as shown in the construction.\\
For each $u_i$, draw a copy of $T_k$, and name the central vertices as $u_i$ and $\overline{u_i}$. Draw m vertices $C_1,C_2, \dots C_m$. If $C_i = u_j \vee \overline{u_k} \vee u_l$, then draw three edges incident from $C_i$ to $u_j, \overline{u_k}$ and $ u_l$. Subdivide each of these edge $k-1$ times, which lead to a path from $C_i$ to $u_j$, $C_i$ to $\overline{u_k}$, $C_i$ to $u_l$. We call them as $Q_{ij}, Q_{i\overline{k}}, Q_{il}$ respectively. Let $P_{ij} = Q_{ij}-\{u_j\}$, $P_{i\overline{k}} = Q_{i\overline{k}}- \{ \overline{u_k} \} $.

		\begin{figure}[!h]
			\centering
			\includegraphics[scale=0.77]{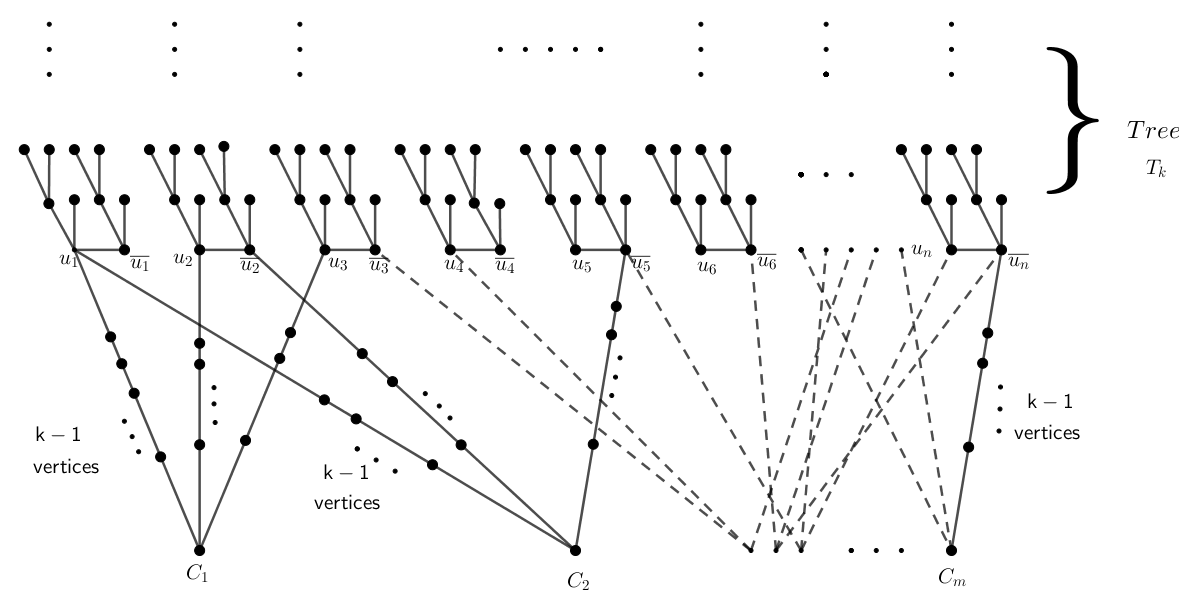}
			\caption{Reduction graph $G_1$ for an example of CNF, where $C_1=\{u_1,u_2,u_3\}, C_2=\{u_1,\overline{u_2},\overline{u_5}\}$, etc...}
			\label{k-eds}
		\end{figure}

Claim 1: If there exists a truth assignment $T:u_i\rightarrow \{0,1\}$ satisfying CNF for the EXACT 3-SAT problem, then there exists an efficient $k$-limited dominating broadcast $f$ on a graph $G_1$. 

Proof of claim 1:
    If $ T(u_i) = 1 $, then assign $ f(u_i) = k $.  
If $ T(u_i) = 0 $, then assign $ f(\overline{u_i}) = k $.  
Set $ f(v) = 0 $ for the remaining vertices of $ G_1 $.

Each vertex in $ T_k $ is dominated by the corresponding central vertex ($ u_i $ or $ \overline{u_i} $) that has been assigned the cost $ k $. Since only one literal is true for each clause $ C_j $, the vertex corresponding to that literal dominates $ C_j $, and $ d(C_j, u_i) > k $ for all other broadcasting vertices. 

The vertices of degree two are at most a distance $ k $ from the corresponding $ u_i $ or $ \overline{u_i} $, and at least a distance $ k+1 $ from other broadcasting vertices, ensuring they are dominated exactly once. 

Therefore, $ f $ is an efficient $ k $-limited dominating broadcast.

If we prove the $f$ obtained in claim 1 is the only efficient $k$-limited dominating broadcast on $G_1$, then the other way is also true. In claim 2, we provide a proof of this assertion.


Claim 2: If there exists an efficient $k$-limited dominating broadcast on $G_1$, then there exists a truth assignment $T:u_i\rightarrow\{0,1\}$  that satisfies CNF.

\noindent Proof of claim 2:
    Based on Observations \ref{obs_oTk} and Lemma \ref{Tklemma}, in any efficient $k$-limited dominating broadcast function $f$, the vertices in $T_k$ must be dominated by either the corresponding $u_i$ or $\overline{u_i}$, and no vertex in $P_{ij}$ can dominate vertices in $T_k$. Consequently, we have $f(u_i) = k$ or $f(\overline{u_i}) = k$. It is important to note that all vertices in $P_{ij}$, except for $C_i$, are dominated. Since these vertices are at most a distance $k$ from $u_j$ and $\overline{u_j}$, one of $u_j$ or $\overline{u_j}$ must receive the cost $k$.

Without loss of generality, let $C_i = u_j \vee \overline{u_k} \vee u_l$. All three literals are at a distance $k$ from $C_i$. Therefore, exactly one of them (without loss of generality, $u_j$) will receive the cost $k$, while the other two ($\overline{u_k}$ and $u_l$) will not. However, for each $u_p$, where $1 \leq p \leq n$, either $u_p$ or $\overline{u_p}$ must receive the cost $k$. Consequently, $\overline{u_l}$ and $u_k$ will receive the cost $k$.

Thus, for every clause $C_i$, all three literals within that clause are at a distance $k$ from $C_i$. To dominate $C_i$ exactly once, exactly one of these three literals receives the cost $k$, implying that the negated literals of the other two receives the cost $k$. Therefore, the broadcasting vertices are either $u_i$ or $\overline{u_i}$.

Let this assignment be denoted as $f$. We then construct the corresponding truth assignment $T$. If $f(u_i) = k$, then $T(u_i) = 1$. If $f(\overline{u_i}) = k$, then $T(u_i) = 0$. 

Suppose $f$ provides an efficient $k$-limited dominating broadcast. In that case, every clause vertex $C_i$ is dominated exactly once, implying that one of its three literals is true. Hence, $T$ is a satisfying assignment for the EXACT 3-SAT problem.

\end{proof}
We proved that for general graphs, determining whether $mcr(G)=k$ is NP-Complete.

\section{Conclusion and future work}

The open problem posed in the broadcast domination chapter of Structures of Domination in graphs (\cite{haynes2021structures})—determining the minimum value of $ k $ for which a graph admits an efficient $ k $-limited dominating broadcast—serves as the primary motivation for this research. In this work, we constructed examples of graphs that are not $ k $-efficiently dominatable and established that determining whether a general graph is $ k $-efficiently dominatable is NP-complete. Furthermore, we investigated this concept on standard graphs, lexicographic products, and strong products of graphs by computing $ \text{mcr}(G) $ and $ \gamma_{ebk}(G) $, alongside deriving other results. As a direction for future research, we are currently exploring efficient $k$-limited dominating broadcasts on trees and other classes of graphs.





\backmatter




\bibliography{m} 

\end{document}